\DeclareSymbolFontAlphabet{\mathbbm}{bbold}
\pgfplotsset{compat=1.15}
\theoremstyle{plain}
\newtheorem{teo}{Theorem}[section]
\newtheorem{prop}[teo]{Proposition}
\newtheorem{cor}[teo]{Corollary}
\newtheorem{lem}[teo]{Lemma}
\theoremstyle{definition}
\newtheorem{defi}[teo]{Definition}
\theoremstyle{remark}
\newtheorem{rema}[teo]{Remark}
\newtheorem{exemple}[teo]{Example}
\DeclareMathOperator{\map}{map}
\DeclareMathOperator{\Conf}{Conf}
\DeclareMathOperator{\UConf}{UConf}
\DeclareMathOperator{\id}{id}
\DeclareMathOperator{\im}{im}
\DeclareMathOperator{\Id}{Id}
\DeclareMathOperator{\Mor}{Mor}
\DeclareMathOperator{\Ob}{Ob}
\DeclareMathOperator{\Top}{Top}
\DeclareMathOperator{\Moore}{Moore}
\DeclareMathOperator{\Sing}{Sing}
\newcommand{\C}{\mathcal{C}}
\title{The Barratt--Priddy--Quillen theorem via scanning methods}
\author{Marie-Camille Delarue}
\thanks{Université Paris Cité and Sorbonne Université, CNRS, IMJ-PRG, F-75013 Paris, France.}
\begin{document}

\begin{abstract}
    The homology of the symmetric groups stabilizes, and the Barratt--Priddy--Quillen theorem identifies the stable homology with that of the infinite loop space underlying the sphere spectrum.
    We formulate a new proof inspired by Galatius and Randal-Williams using scanning methods.
    We build a topological model for the monoid formed by all the symmetric groups as a category of paths in $\mathbb{R}^\infty$ and build a scanning map from this model to a space of local images.
\end{abstract}

\maketitle
\tableofcontents

\section{Introduction}
The homology groups of the symmetric groups, $H_k(\Sigma_n;\mathbb{Z})$, were computed for every $k$ and $n$ by Nakaoka \cite{nakaoka}, but these computations proved intricate. 
It follows from Nakaoka's computation that the $k$th homology of the symmetric groups $H_k(\Sigma_n;\mathbb{Z})$ stabilizes with $n$ for each $k$. 
Barratt and Priddy, using the Quillen plus construction \cite{barrattpriddyone}, later identified the stable part of the homology with that of the zeroth component of a certain infinite loop space, that is, a topological space which has a grouplike $E_\infty$--algebra structure. 
They moreover explicitly determined this infinite loop space, which is the one underlying the sphere spectrum. 
\begin{teo}[Barratt--Priddy--Quillen \cite{barrattpriddytwo}]
There is a homology equivalence:
\[B\Sigma_\infty \overset{H_*}{\simeq} \Omega^\infty_0 S^\infty.\]
\end{teo}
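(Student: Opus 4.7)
The plan is to realize $\coprod_{n \geq 0} B\Sigma_n$ as the classifying space of a topological monoid $\mathcal{M}$ built from paths of configurations in $\mathbb{R}^\infty$, and then produce a scanning map $\sigma\colon B\mathcal{M} \to \Omega^{\infty} S^\infty$ whose source, once group-completed, is homotopy equivalent to $B\Sigma_\infty \times \mathbb{Z}$ via the McDuff--Segal group completion theorem. Concretely, I would take objects of $\mathcal{M}$ to be finite subsets of a hyperplane and morphisms to be Moore paths of unordered configurations in $\mathbb{R}^\infty$ sliced by a height function; composition is concatenation, and the monoid structure comes from horizontal juxtaposition, using that $\mathbb{R}^\infty$ has room for all such sums. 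The key preliminary point is to check that the classifying space of this path category is homotopy equivalent to $\coprod_n B\Sigma_n$, which should follow from the contractibility of unordered configuration spaces in $\mathbb{R}^\infty$ (a consequence of $\mathbb{R}^\infty$ being a model for $E\Sigma_n$ levelwise) together with a Quillen-style argument identifying endomorphism spaces at each object with the relevant symmetric group.

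Next I would construct the scanning map. The idea, following Galatius--Kupers--Randal-Williams, is that a configuration viewed through a small window looks locally like a pointed sphere $S^0$ away from its points and like a puncture near a point; performing this operation continuously in the window produces, for each configuration $c$, a compactly supported map $\mathbb{R}^\infty \to S^\infty$, i.e., an element of $\Omega^\infty S^\infty$. Making this work with the path monoid structure yields a map of topological monoids $\mathcal{M} \to \Omega^\infty S^\infty$ (where the target is viewed as a grouplike $E_\infty$-space). After delooping and group completing the source, I obtain
\[
\sigma\colon (\mathbb{Z} \times B\Sigma_\infty)^+ \longrightarrow \Omega^\infty S^\infty,
\]
which on path components induces the identification $\pi_0 = \mathbb{Z}$ on both sides.

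To prove this is a homology equivalence I would show the unstable scanning map from $B\mathcal{M}$ into a space $\Psi$ of ``local images'' (compactly supported submanifolds-of-points of $\mathbb{R}^\infty$, or equivalently $\Omega^\infty S^\infty$ after a Pontryagin--Thom identification) is a weak equivalence of grouplike $E_\infty$-spaces after group completion. The standard route is a microfibration / cofinality argument: one filters configurations by a scale parameter, checks that the scanning map restricts to a fibration on each stratum, and identifies the fibers with contractible spaces of ways to extend a local picture to a global configuration in $\mathbb{R}^\infty$. The identification of $\Psi$ with $\Omega^\infty S^\infty$ then comes from the classical Pontryagin--Thom description: points in $\mathbb{R}^\infty$ are $0$-dimensional closed submanifolds, whose bordism spectrum is $\mathbb{S}$.

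The main obstacle, I expect, is the last step: proving that the scanning map is a weak equivalence before passing to homology. The contractibility of fibers is the technical heart of the argument and requires a careful geometric argument, typically a straight-line deformation of a configuration to its germ at a chosen point, done coherently in families. Ensuring that this deformation is compatible with the path-category and monoid structures on $\mathcal{M}$, and that the resulting simplicial / semisimplicial replacement admits a strictly monoidal scanning, is where the bulk of the work lies; once this is in place, group completion plus the Pontryagin--Thom identification of the target gives the Barratt--Priddy--Quillen equivalence.
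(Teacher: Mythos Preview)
Your proposal shares the paper's architecture---build a topological model for $\coprod_n B\Sigma_n$ from configurations and paths, scan it into a space of local images, identify the target with the sphere spectrum, and finish with McDuff--Segal group completion---but the scanning step you describe is \emph{not} the one the paper carries out. You scan configurations as zero-manifolds: near a point of $c$ one sees a puncture, elsewhere one sees nothing, and the local picture is an $S^0$; Pontryagin--Thom then identifies the target with $\Omega^\infty S^\infty$. That is precisely the Galatius--Kupers--Randal-Williams argument, which the paper explicitly sets out to replace. The paper instead scans the \emph{paths} themselves as one-dimensional subsets of $I^N\times\mathbb{R}$: it introduces a tower of spaces $\Phi_k^N$ of partially compactified path-pictures, proves $\Phi_0^N\simeq B\C_N$ via a semi-simplicial resolution over the poset $\mathbb{R}$, deloops one coordinate at a time ($\Phi_k^N\simeq\Omega\Phi_{k+1}^N$) using Segal's lemma and a microfibration-with-contractible-fibres argument, and finally identifies the terminal local-image space $\Phi_N^N$ with $S^N$ by an open-cover homotopy pushout ($U_0=$ no strand near the origin, $U_1=$ exactly one strand, $U_0\cap U_1\simeq S^{N-1}$). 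The payoff of the paper's route is that it generalises to families of groups whose elements are naturally encoded by one-dimensional diagrams (trees for Higman--Thompson groups), which is the stated motivation. There is also a small internal tension in your sketch: you set up a Moore-path category but then scan only the underlying configurations; if you really intend the GKR--W argument the path category is superfluous, and if you intend the paper's argument you must scan the strands, not the time-slices.
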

Here $\Omega^\infty_0 S^\infty$ refers to the basepoint component of the infinite loop space of the sphere spectrum. 

This result was originally proved using Dyer--Lashof operations \cite{barrattpriddyone} following an argument of McDuff--Segal \cite{mcduffsegal} but was later reproven in many different ways -- some of which make use of scanning methods.
Segal first introduced the term of ``scanning'' in a study of homotopy types of spaces of holomorphic functions of certain closed surfaces \cite{segal79}, based on an idea of McDuff \cite{mcduff}.
It was later very successfully used to compute the stable homology of moduli spaces of manifolds \cite{madsenweiss, sorenrwModuliMonoids, gmtw} 
as well as graphs in order to compute the stable homology of automorphisms of free groups \cite{soren}. 

To get a better idea of the proof strategy, let us briefly examine the case of the space of embeddings of a $d$-dimensional manifold in $\mathbb{R}^N$, for large $N$.
There is a map, called a scanning map, from the space of embeddings of $M$ in $\mathbb{R}^N$ into an $N$-loop space $\Omega^N\Phi_N$. 
We view the $N$-loop space as the space of maps from the one-point compactification of $\mathbb{R}^N$ into the space $\Phi_N$ of local ``images''. 
Every embedding of the manifold $M$ into $\mathbb{R}^N$ determines an element in the loop space $\Omega^N\Phi_N$, via the map that associates to each point of $\mathbb{R}^N\cup\{\infty\}$ what one ``sees'' of the embedded manifold locally at that point. 
There are two possibilities: when the point is close to the manifold, it sees the tangent plane to the manifold, which is a $d$-dimensional plane, and when the point is far enough away from the manifold, it sees the empty space.  
Therefore, in this case, the space $\Phi_N$ turns out to be the tautological normal bundle associated to the compactification of the $d$-dimensional Grassmannian.
Letting $N$ tend to infinity results in an infinite loop space whose homology computes the stable homology of the moduli space of d-dimensional manifolds \cite{gmtw,grwI}.
Galatius, Madsen, Tillmann, and Weiss later reformulate this result in the context of cobordism categories \cite{gmtw}.

Galatius and Randal-Williams give a proof of the Barratt--Priddy--Quillen theorem \cite{sorenrwModuliMonoids} (Hatcher~\cite{hatcherMW}), which also uses scanning methods. 
This proof realizes the symmetric groups as a category of ``0-cobordisms'', that is, as a category where morphisms are manifolds of dimension 0, i.e. configurations of points. 
They then construct a scanning map from the space of cobordisms.

We give here a different proof, where an element in the symmetric groups is instead seen as a sort of 1-cobordism between configurations of points. 
We build a category internal to topological spaces where objects are configurations of points in $\mathbb{R}^\infty$ and morphisms are embedded paths between them. 
The classifying space of this category is equivalent to that of the symmetric groups. 
In order to construct a scanning map, we identify a space $\Phi_N$ of local images of the category, which is a sort of compactification of the morphism space.
Finally, we show that the spectrum constructed from the spaces $\Phi_N$ is equivalent to the sphere spectrum.

We develop this different proof because it generalizes better to other families of groups. 
In a follow-up paper \cite{delarue1}, we apply the methods used here to more complicated groups, the Higman--Thompson groups \cite{higman}. 
They are a generalization of the Thompson groups, which are groups of piecewise linear self-maps of a segment satisfying certain conditions.
Elements in the Higman--Thompson groups can be represented as trees, therefore as paths between configurations which are allowed to collide.

\paragraph{\textbf{Acknowledgements.}}
The author is grateful to Najib Idrissi and Nathalie Wahl for their encouragement. 
The author was partially supported by the project ANR-22-CE40-0008 SHoCoS, and hosted by the Danish National Research Foundation through the Copenhagen Center for Geometry and Topology (DNRF151).

\section{Preliminaries}

We start by recalling elementary notions about configuration spaces, as well as the language of semi-simplicial spaces, and microfibrations.
We let $I$ denote the unit interval $[0,1]$.
For $n\in\mathbb{N}$, let $\Sigma_n$ be the symmetric group on $n$ elements. Each group $\Sigma_n$ can be considered as a category, with one object denoted by $*_n$, and with space of endomorphisms the group $\Sigma_n$. Let us consider the category $\Sigma := \displaystyle \bigsqcup \Sigma_n$. 
It is equipped with a monoidal structure induced by the maps $\Sigma_n\times \Sigma_m\rightarrow \Sigma_{n+m}$.

\begin{defi}
The ordered configuration space of $n$ points in a topological space $X$ is the space
\[\Conf  (n,X):= \{(x_1,\dots, x_n)\in X^n \mid \forall i\neq j, x_i\neq x_j\}.\]
The symmetric group $\Sigma_n$ acts on this space by permuting the indices of the points in a configuration.
The unordered configuration space of $n$ points in $X$ is the quotient by this free action:
\[\UConf  (n,X):= \Conf  (n,X)/\Sigma_n.\]
\end{defi}

For every $n\in\mathbb{N}$, the ordered configuration space of $n$ points in $\mathbb{R}^\infty$ is weakly contractible \cite[Theorem~4.4]{itloopspacesmay}.
(A direct consequence of this is that the loop space at any basepoint of the ordered configuration space $\Omega_x \Conf (n, \mathbb{R}^\infty)$ is also contractible.)
The space $ \UConf (n, \mathbb{R}^\infty)$ therefore appears as the quotient of a contractible space by a free action of $ \displaystyle \Sigma_n$, so it is a model for $B\displaystyle \Sigma_n$.
 
    % \begin{proof}
    % Let $n\in\mathbb{N}$.
    % For $r\leq n$, let us take $Y_r = (y_1, \dots, y_r)$ a point in $\Conf (r,\mathbb{R}^\infty)$. We can let $Y_0 = \emptyset$.
    % Let us consider the fibration 
    % \begin{align*}
    % p_r: \Conf (n-r, \mathbb{R}^\infty \backslash Y_r) &\rightarrow \mathbb{R}^\infty\backslash Y_r\\
    % (x_1, \dots, x_{n-r}) &\mapsto x_1
    % \end{align*}
    % The map $p_r$ is a fibration with fiber $\Conf (n-r-1, \mathbb{R}^\infty\backslash Y_{r+1})$.
    % This leads to the following long exact sequence in homotopy groups:
    % \[\dots\rightarrow \pi_i(\Conf (n-r-1, \mathbb{R}^\infty\backslash Y_{r+1})) \rightarrow \pi_i(\Conf (n-r, \mathbb{R}^\infty \backslash Y_r)) \rightarrow \pi_i(\mathbb{R}^\infty \backslash Y_r) \rightarrow \dots.\]
    % The space $\mathbb{R}^\infty \backslash Y_r$ is homotopy equivalent to a wedge of $r$ spheres $S^\infty$, which is contractible for every $r$.
    % Therefore, for every $i \in\mathbb{N}$, \[\pi_i(\Conf (n-r-1, \mathbb{R}^\infty\backslash Y_{r+1})) \simeq \pi_i(\Conf (n-r, \mathbb{R}^\infty \backslash Y_r)).\]
    % In particular, for every $i \in\mathbb{N}$,
    % \[\pi_i(\Conf (1, \mathbb{R}^\infty\backslash Y_{n-1})) \simeq \pi_i(\Conf (n, \mathbb{R}^\infty)).\]
    % Moreover,
    % \begin{align*}\Conf (1, \mathbb{R}^\infty\backslash Y_{n-1})) &\simeq \mathbb{R}^\infty\backslash Y_{n-1}\\
    % &\simeq \vee_{n-1}\displaystyle S^\infty\\
    % &\simeq *.
    % \end{align*}
    % So $\Conf (n, \mathbb{R}^\infty)$ is weakly contractible.
    % \end{proof}

\subsection{Simplicial and semi-simplicial spaces}

Let us briefly recall the basics of simplicial spaces. 
The model for $\Top$ used here is that of compactly generated Hausdorff spaces equipped with the Serre model structure.
Let $\Delta$ be the category with objects $[n] = \{0 < \dots < n\}$, and with morphisms from $[n]$ to $[m]$ the order-preserving set maps.
A simplicial space is then a functor $\Delta^{op}\rightarrow \Top$.
A cosimplicial space is a functor $\Delta \rightarrow \Top$.\\
We denote the topological $n$-simplex by $\Delta^n_{\Top}$:
\[\Delta^n_{\Top} := \{x\in\mathbb{R}^{n+1} \mid x_i \geq 0 \text{ for } 0\leq i\leq n, \text{ and } x_0+\dots +x_n = 1.\}\]
These form a cosimplicial space in a natural way, and play a role in defining the geometric realization of a simplicial space $\vert X_\bullet \vert$, 
which is the space $\bigsqcup_n \left( X_n\times \Delta^n_{\Top}/\sim\right)$ where $(x, f_*p)\sim (f^*x,p)$ for every simplicial map $f:[k]\rightarrow [l]$, $x\in X_l$, $p\in\Delta^k_{\Top}$.

To every category we can associate a simplicial set, its nerve, in a natural way.\\
The nerve of a category $\C$ is the simplicial set $N\C$ whose $n$-simplices are chains of $n$ composable morphisms:
\[(N\C)_n = \Mor(\C)\times_{\Ob(\C)} \dots \times_{\Ob(\C)} \Mor(\C).\]
The classifying space $B\C$ of a category $\C$ is the geometric realization of its nerve.

The categories that we work with have some extra structure, namely they are internal to topological spaces \cite{segal68}.
A category internal to topological spaces $\C$ is given by a pair of topological spaces called the object space $\Ob(\C)$ and the morphism space $\Mor(\C)$, 
together with continuous maps:
\begin{itemize}
    \item $i:\Ob(\C)\rightarrow \Mor(\C)$ which sends an object to the identity morphism;
    \item $s,t:\Mor(\C)\rightarrow \Ob(\C)$ which send an arrow to its source (resp. target);
    \item $\circ:\Mor(\C)\times_{t,s}\Mor(\C)\rightarrow \Mor(\C)$, which is composition of morphisms.
\end{itemize}
We can define the nerve of a topological category $\C$ in the same way as for regular categories, but we get a simplicial space instead of a simplicial set.

Let us now introduce semi-simplicial spaces, which we will often prefer to simplicial spaces.
Let $\Delta_+$ be the subcategory of $\Delta$ which contains all objects but only injective maps.
A semi-simplicial space is a functor $\Delta_+^{op}\rightarrow \Top$.
There is a forgetful functor from simplicial spaces to semi-simplicial spaces, induced by the inclusion $\Delta_+\hookrightarrow \Delta$. It amounts to forgetting the degeneracies. % We will consider that the category of semi-simplicial spaces is equipped with the Kan-Quillen model structure transferred from simplicial sets by the forgetful functor.
The \textit{thick} realization of a semi-simplicial space $X_\bullet$, $\vert\vert X_\bullet \vert\vert$, is defined as $ \bigsqcup_n \left(X_n\times \Delta^n_{\Top}/\sim\right)$, where $(x, f_*p)\sim (f^*x,p)$ for every face map $f:[k]\hookrightarrow [l]$ in $\Delta_+$.
For a simplicial space $X_\bullet$, the geometric realization is a quotient of the thick geometric realization.

Let $\C$ be a category. We can obtain two classifying spaces: the usual one, $B\C$, by taking the geometric realization of the nerve, and the thick one, $\mathbb{B}\C$, by taking the \emph{thick} realization of the nerve seen as a \emph{semi}-simplicial space.

For $X_\bullet$ a semi-simplicial space, the $n$th Segal map is the map
    \[X_n \rightarrow X_1\times_{X_0}\dots\times_{X_0} X_1\] induced by the maps $X_n\rightarrow X_1$ coming from the following maps $a^{i}$ in $\Delta_+$, for $0\leq i\leq n-1$:
    $$\begin{array}{rrr}
        a^{i}:[1] \rightarrow [n], & 0\mapsto i, & 1\mapsto i+1
    \end{array}$$
We say that a semi-simplicial space $X_\bullet$ is \emph{Segal} if its Segal maps are weak equivalences. For example, nerves of categories are Segal spaces.
A result of Segal \cite[Proposition~1.5]{segal74} states that for a simplicial Segal space with $X_1$ path-connected, there is a weak equivalence $X_1\simeq\Omega\vert\vert X_\bullet\vert\vert$.
The proof can be extended to the case of semi-simplicial spaces.

\begin{lem}\label{semiSegal}
Let $X_\bullet$ be a semi-simplicial Segal space where $X_1$ is path-connected. Then there is a weak equivalence $X_1\simeq\Omega\vert\vert X_\bullet\vert\vert$.
\end{lem}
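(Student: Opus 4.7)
The plan is to replay Segal's proof of \cite[Proposition~1.5]{segal74} with two adjustments for the semi-simplicial setting: the thick realization $\vert\vert{-}\vert\vert$ replaces the geometric realization $\vert{-}\vert$ throughout, and the extra-degeneracy argument used in the simplicial case to contract the path-space must be replaced by one that does not rely on degeneracies.

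I would first build a path-space semi-simplicial space $E_\bullet$ with $E_n := X_{n+1}$, whose face maps are $d_1,\dots,d_{n+1}$ inherited from $X_\bullet$. The remaining face $d_0 : X_{n+1}\to X_n$ assembles into a semi-simplicial map $E_\bullet \to X_\bullet$, which realizes to a continuous map $p : \vert\vert E_\bullet\vert\vert\to \vert\vert X_\bullet\vert\vert$.

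After fixing a basepoint $*\in X_0 \subset \vert\vert X_\bullet\vert\vert$, the core of the argument consists of three claims: (a) the homotopy fiber of $p$ over $*$ is weakly equivalent to $X_1$, directly from the Segal condition applied to $d_0 : E_0 = X_1 \to X_0$; (b) $p$ is a quasi-fibration onto the component of $*$, by a skeletal induction on $\vert\vert X_\bullet\vert\vert$ as in Segal's argument; and (c) $\vert\vert E_\bullet\vert\vert$ is weakly equivalent to $X_0$ and, in particular, contractible on the component of $*$. The long exact sequence of the quasi-fibration then yields $X_1\simeq\Omega\vert\vert X_\bullet\vert\vert$, where the path-connectedness of $X_1$ is used precisely to ensure that $\pi_0(X_1)$ is trivially a group, which is the standard hypothesis making Segal's conclusion go through.

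The main obstacle is point (c). In the simplicial case, Segal uses the extra degeneracy $s_0 : X_{n+1}\to X_{n+2}$ to produce a contraction of $E_\bullet$ onto $X_0$, and this degeneracy is unavailable in the semi-simplicial setting. I would instead exploit the Segal condition directly: the equivalences $X_{n+1} \simeq X_1 \times_{X_0}^h \cdots \times_{X_0}^h X_1$ (with $n+1$ factors) identify $E_\bullet$, up to levelwise weak equivalence, with the two-sided bar construction of the $X_1$-action on $X_0$. The thick realization of such a bar construction deformation-retracts onto $X_0$ by the standard argument for fat bar constructions, which uses only face maps and is therefore valid in the semi-simplicial setting.
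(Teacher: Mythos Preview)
The paper does not prove this lemma; it simply asserts that Segal's argument ``can be extended to the case of semi-simplicial spaces,'' so your plan is really an attempt to fill in what the paper omits. Steps (a) and (b) follow Segal and go through for thick realizations.

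The genuine gap is in (c). Your claim that the contraction of the bar construction onto $X_0$ ``uses only face maps'' is false: the standard contraction of a d\'ecalage, or of $B_\bullet(\ast,A,A)$, onto its augmentation is via an \emph{extra degeneracy} --- the map inserting a unit --- and that is exactly the structure you have discarded. The Segal condition alone does not manufacture a replacement; a semi-Segal space models a \emph{non-unital} category up to homotopy, and there is no face-maps-only contraction of its d\'ecalage in general. What actually rescues the argument is that path-connectedness of $X_1$ forces the semi-Segal space to be group-like, and from this one can produce \emph{homotopy} units (hence a homotopy extra degeneracy), or equivalently thicken $X_\bullet$ to a levelwise-equivalent simplicial Segal space and then run Segal's proof verbatim; see for instance the treatment of semi-Segal spaces by Ebert and Randal-Williams. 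Separately, you pass in (c) from $\|E_\bullet\|\simeq X_0$ to ``contractible on the component of $\ast$'' without justification; this needs $X_0$ weakly contractible, which is not among the stated hypotheses (Segal's original Proposition~1.5 assumes $X_0=\ast$, and the paper's only use of the lemma, in Proposition~\ref{higherdelooping}, does have $X_0\simeq\ast$, so the intended statement presumably carries that hypothesis).
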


The topological categories that we consider have various structures:
\begin{defi}\cite[Chapter 4]{may}
An $E_n$-algebra in a symmetric monoidal category $\C$ is an object $X$ of $\C$ equipped with maps $E_n(k)\otimes X^{\otimes k}\rightarrow X$.
The model we use for $E_n$ is that of the little $n$-cubes operad.
\end{defi}

\subsection{Microfibrations.}
Let us now introduce a specific type of map called a Serre microfibration. 
These are maps that satisfy a weaker version of the homotopy lifting property than normal fibrations. 
However, they still have good properties.
\begin{defi}
A map $p:E\rightarrow B$ is called a Serre microfibration if, for every $k\geq 0$ and every commutative square:
\begin{center}
    \begin{tikzcd}
    D^k\times\{0\}\arrow{d} \arrow{r} &E\arrow{d}{p}\\
    D^k\times[0,1]\arrow{r} &B,
    \end{tikzcd}
\end{center}
there exists $\epsilon>0$ and a lifting $h:D^k\times [0,\epsilon]\rightarrow E$ such that the following diagram commutes:
\begin{center}
\begin{tikzcd}[row sep = 0.4cm, column sep = 0.4cm]
D^k\times\{0\} \arrow[rr] \arrow[dd] \arrow[dr] && E\arrow[dd]\\
&D^k\times[0,\epsilon] \arrow[dl]\arrow[ur,dashrightarrow]\\
D^k\times[0,1]\arrow[rr] &&B.
\end{tikzcd}
\end{center}
\end{defi}

\begin{exemple}
    A Serre fibration is a Serre micro-fibration.
    Let $p:E\rightarrow B$ be a Serre fibration, and $V$ an open subset of $E$. Then the restriction of $p$ to $V$ is a Serre microfibration, but not necessarily a Serre fibration (see for example Raptis in \cite{raptis}.)
\end{exemple}

\begin{lem}\cite[Lemma~2.2]{weisslemma}
A Serre micro-fibration with weakly contractible fibers is a Serre fibration, and therefore a weak homotopy equivalence.
\end{lem}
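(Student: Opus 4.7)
The plan is to verify the homotopy lifting property for disks, since this characterizes Serre fibrations. Given a commutative square with top map $f \colon D^k \times \{0\} \to E$ and bottom map $H \colon D^k \times [0,1] \to B$, I want to construct a global lift $\tilde{H} \colon D^k \times [0,1] \to E$. A direct application of the Serre microfibration hypothesis produces a partial lift $h_0$ on $D^k \times [0, \epsilon_0]$ for some $\epsilon_0 > 0$.

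I then iterate: treating $h_n(-, t_n)$ as new initial data and $H|_{D^k \times [t_n, 1]}$ as the new homotopy, the microfibration property extends the lift to $D^k \times [t_n, t_{n+1}]$ for some $t_{n+1} > t_n$. Concatenation yields compatible partial lifts $h_n$ on intervals $[0, t_n]$ of strictly increasing length. If the sequence $t_n$ reaches $1$ these patch into the desired global lift; otherwise $t_n \to t^{\ast} < 1$ with increments shrinking to zero, and I must first extend the lift defined on $D^k \times [0, t^{\ast})$ by concatenation to a lift on $D^k \times [0, t^{\ast}]$ before continuing the iteration. A standard transfinite induction (or Zorn's lemma applied to the poset of partial lifts, ordered by restriction) then completes the construction, reaching $t = 1$.

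The contractibility of the fibers is the key ingredient in the patching step at an accumulation time $t^{\ast}$. Over each $x \in D^k$, the values $h_n(x, t_n)$ lie in fibers $p^{-1}(H(x, t_n))$ sitting over points converging to $H(x, t^{\ast})$, and a priori this sequence need not converge. Weak contractibility of $p^{-1}(H(x, t^{\ast}))$ allows me to realign the partial lifts within fibers, parameterized continuously in $x$, so that a coherent limit section exists at time $t^{\ast}$, which can then be prolonged by a fresh application of microfibration. Once the homotopy lifting property is established, $p$ is a Serre fibration; combined with weak contractibility of the fibers, the long exact sequence of homotopy groups immediately gives that $p$ is a weak equivalence.

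The hard part will be the limit-patching step: a naive concatenation of the partial lifts need not be continuous at $t^{\ast}$, and the microfibration property alone only provides germinal lifts with \emph{a priori} uncontrolled time length. The delicate point is to use weak contractibility of the fibers in a genuinely parameterized way over all $x \in D^k$, compatibly with the microfibration's local extensions, so that the limiting lift glues continuously. Overcoming this tension between the global nature of the patching and the purely local nature of the microfibration hypothesis is the main technical obstacle in the argument.
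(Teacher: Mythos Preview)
The paper does not prove this lemma; it is quoted verbatim from Weiss with a citation and no argument. So there is no in-paper proof to compare against, only Weiss's original.

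Your plan, however, has a genuine gap precisely where you flag it. At an accumulation time $t^\ast$ the values $h_n(x,t_n)$ lie in the fibers $p^{-1}(H(x,t_n))$, which are \emph{different} spaces for different $n$; weak contractibility of the single fiber $p^{-1}(H(x,t^\ast))$ gives no control over such a sequence. To ``realign within fibers'' you would need to move points from nearby fibers into the limit fiber, i.e.\ you would need something like a local trivialization or at least path-lifting between fibers --- exactly what a microfibration fails to provide globally. So the patching step is not just hard, it is not available with the tools on the table, and the Zorn/transfinite framework cannot get past the first limit ordinal.

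Weiss's actual argument avoids accumulation points altogether. The weakly contractible fibers are used \emph{before} iterating, not at a limit stage: one first proves that any partial lift over $D^k\times[0,t]$ can be prolonged to $D^k\times[0,t+\delta]$ for a $\delta>0$ that depends only on the original lifting problem (via a compactness argument on $D^k\times I$), not on $t$ or on the particular partial lift. Concretely, the germinal lift supplied by the microfibration is combined with a fiberwise null-homotopy (available because the relevant relative homotopy groups of the fibers vanish) to push the lift forward by a definite amount. With a uniform step size, finitely many applications reach $t=1$ and no transfinite patching is needed. If you want to complete your write-up, this is the mechanism to supply; the final sentence of your proposal, deducing weak equivalence from the long exact sequence once $p$ is a Serre fibration, is correct.
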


The following proposition will enable us to compare the usual and thick realizations.
\begin{prop}\label{levelwisereal}\cite[Propositions~2.7-2.8]{grwI}.
\begin{itemize}
\item[$\bullet$] Let $f_\bullet:X_\bullet\rightarrow Y_\bullet$ be a map of semi-simplicial spaces, and $n\in\mathbb{N}$ such that for all $p\in\mathbb{N}$, $f_p:X_p\rightarrow Y_p$ is $(n-p)$-connected. Then $||f_\bullet||:||X_\bullet||\rightarrow ||Y_\bullet||$ is $n$-connected.
\item[$\bullet$] Consider a semi-simplicial set $Y_\bullet$ (viewed as a discrete space) and a Hausdorff space $Z$. Let $X_\bullet \subset Y_\bullet \times Z$ be a sub-semi-simplicial space which is an open subset in every degree.
Then the map $\pi:||X_\bullet|| \rightarrow Z$ is a Serre microfibration.
\end{itemize}
\end{prop}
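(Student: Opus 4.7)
The plan is to treat the two bullets separately, as they are of quite different flavors.

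For the first bullet, I would proceed by induction on the skeletal filtration of the thick realization. Recall that $\|X_\bullet\|$ admits a filtration by subspaces $\|X_\bullet\|^{(p)}$, namely the image of $\bigsqcup_{k\leq p} X_k \times \Delta^k_{\Top}$, fitting into pushout squares
\begin{equation*}
\begin{tikzcd}
X_p \times \partial \Delta^p_{\Top} \arrow[r] \arrow[d, hook] & \|X_\bullet\|^{(p-1)} \arrow[d] \\
X_p \times \Delta^p_{\Top} \arrow[r] & \|X_\bullet\|^{(p)}
\end{tikzcd}
\end{equation*}
with the left vertical map a cofibration, and compatibly for $Y_\bullet$. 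The cofiber of the horizontal inclusions is identified with a $p$-fold suspension of $(X_p)_+$ (respectively $(Y_p)_+$). Since $f_p$ is $(n-p)$-connected, iterating Freudenthal $p$ times gives that its $p$-fold suspension is $n$-connected. Assuming inductively that $\|f_\bullet\|^{(p-1)}$ is $n$-connected, the long exact sequence of the cofiber pair forces $\|f_\bullet\|^{(p)}$ to be $n$-connected as well. One then passes to the colimit, which preserves $n$-connectivity along cofibrations, to conclude.

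For the second bullet, I would first observe that since $Y_\bullet$ is discrete, the thick realization of $Y_\bullet \times Z$ is canonically homeomorphic to $\|Y_\bullet\| \times Z$: the face relations only involve the $Y$-coordinate, so they commute with the external factor of $Z$. Next I would show that $\|X_\bullet\|$ sits as an open subset of $\|Y_\bullet\| \times Z$. For an interior point $[(y,p,z)]$ with $y \in Y_n$ and $p \in \mathrm{int}(\Delta^n_{\Top})$, openness of $X_n \subset Y_n \times Z$ together with discreteness of $Y_n$ yields a neighborhood of the form $\{y\} \times V \times U$ contained in $X_n \times \Delta^n_{\Top}$; boundary points are handled by unravelling the quotient topology via the face compatibility of $X_\bullet$, for which Hausdorffness of $Z$ is convenient. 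Finally, the projection $\|Y_\bullet\| \times Z \to Z$ is a product projection, hence a Serre fibration, and by the example already noted in the excerpt, its restriction to the open subset $\|X_\bullet\|$ is a Serre microfibration.

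The main obstacle, I expect, lies in the first bullet, specifically in the cofiber-suspension identification: handling possibly disconnected or basepointless spaces $X_p, Y_p$ cleanly forces one to work either relatively or with unreduced versions, and one must verify that the suspension-connectivity estimate transfers correctly in that setting. The second bullet is essentially a point-set check: once the identification $\|Y_\bullet \times Z\| \cong \|Y_\bullet\| \times Z$ and the openness of $\|X_\bullet\|$ are in place, the example from the excerpt delivers the microfibration property with no further work.
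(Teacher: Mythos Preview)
First, a framing remark: the paper does not supply its own proof of this proposition --- it is quoted from \cite[Propositions~2.7--2.8]{grwI} without argument --- so there is nothing in the paper to compare your sketch against. What follows is an assessment of the sketch on its own terms.

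Your outline for the first bullet is the standard skeletal-filtration argument and is essentially correct. Two small points: the increase of connectivity under suspension is more elementary than Freudenthal (it follows directly from the long exact sequence of a pair, or from homotopy excision), and the inductive step cannot literally invoke a ``long exact sequence of the cofiber pair'' in homotopy; one should instead use a gluing lemma for maps of homotopy pushouts, or pass through relative Hurewicz. These are routine fixes.

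The second bullet, however, has a genuine gap. Your plan rests on showing that $\|X_\bullet\|$ sits as an \emph{open subspace} of $\|Y_\bullet\|\times Z$, and this is false in general --- indeed the canonical map $\|X_\bullet\|\to\|Y_\bullet\|\times Z$ need not even be a topological embedding. Take $Y_\bullet$ to be the semi-simplicial $1$-simplex (two $0$-simplices $0,1$ and one $1$-simplex $e$), $Z=\mathbb{R}$, $X_0=Y_0\times Z$, and $X_1=\{e\}\times(0,\infty)\subset Y_1\times Z$. The image of $\|X_\bullet\|$ in $[0,1]\times\mathbb{R}$ is then $(\{0,1\}\times\mathbb{R})\cup([0,1]\times(0,\infty))$, which is not open at the vertex point $(0,-1)$. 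Moreover one checks that the quotient topology on $\|X_\bullet\|$ is strictly finer than the subspace topology on this image, so the ``restrict a product fibration to an open subset'' template from the example in the paper cannot be applied. The step you single out --- ``boundary points are handled via face compatibility of $X_\bullet$'' --- is precisely where this fails: being a sub-semi-simplicial space constrains the \emph{faces} of simplices of $X$, not their \emph{cofaces}, and it is the cofaces one would need to control to get openness at a boundary point.

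The argument in \cite{grwI} proceeds differently: it constructs the partial lift directly rather than via an ambient product. Using compactness of $D^k$ one reduces to finitely many simplices of $Y_\bullet$, and then the levelwise openness of $X_p\subset Y_p\times Z$ is what furnishes, simplex by simplex, a uniform $\epsilon>0$ for which the evident candidate lift lands in $\|X_\bullet\|$.
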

A statement analogous to the first point for the regular geometric realization is false unless the simplicial space satisfies a certain condition, mentioned in definition \ref{proper}.

\begin{defi}\cite[Definition~A.4]{segal74}\label{proper}
A simplicial space $X_\bullet$ is \textit{good} if the maps $s_i(X_{p-1})\rightarrow X_p$ are closed Hurewicz cofibrations for every $i$ and $p$.
\end{defi}

\begin{lem}\label{realizations}\cite[Proposition~A.1.(iv)]{segal74}
Let $X$ be a simplicial space. If $X_\bullet$ is good, then the quotient map $||X_\bullet||\rightarrow |X_\bullet|$ is a weak equivalence.
\end{lem}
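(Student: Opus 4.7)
The plan is to show the quotient map is a weak equivalence by comparing the two realizations skeleton by skeleton. Write $||X_\bullet||^{(n)}$ and $|X_\bullet|^{(n)}$ for the $n$-skeleta, that is, the images of $\bigsqcup_{k\leq n} X_k\times\Delta^k_{\Top}$ in the thick and thin realizations respectively. The quotient map $q:||X_\bullet||\to |X_\bullet|$ restricts to $q^{(n)}:||X_\bullet||^{(n)}\to |X_\bullet|^{(n)}$, and the case $n=0$ is trivial since both skeleta coincide with $X_0$.

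For the inductive step, I would express each skeleton as a pushout attaching the $n$-simplices along their boundaries. The thick skeleton $||X_\bullet||^{(n)}$ is the pushout of $||X_\bullet||^{(n-1)}\leftarrow X_n\times\partial\Delta^n_{\Top}\to X_n\times\Delta^n_{\Top}$. The thin skeleton $|X_\bullet|^{(n)}$ is the pushout of $|X_\bullet|^{(n-1)}\leftarrow \bigl(L_nX\times\Delta^n_{\Top}\cup X_n\times\partial\Delta^n_{\Top}\bigr)\to X_n\times\Delta^n_{\Top}$, where $L_nX=\bigcup_i s_i(X_{n-1})$ is the latching object; the extra piece $L_nX\times\Delta^n_{\Top}$ encodes the degeneracy identifications. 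The goodness hypothesis, namely that each $s_i(X_{p-1})\hookrightarrow X_p$ is a closed Hurewicz cofibration, implies inductively that $L_nX\hookrightarrow X_n$ is a closed Hurewicz cofibration, so both pushouts are homotopy pushouts.

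Using that $q^{(n-1)}$ is a weak equivalence by induction, it remains to compare the newly attached material. The point is that the contribution of degenerate simplices $(s_ix,p)$ with $x\in X_{n-1}$ in $||X_\bullet||^{(n)}$ is identified in $|X_\bullet|^{(n)}$ with $(x,s_i^*p)$, which already belongs to $|X_\bullet|^{(n-1)}$. Because $L_nX\hookrightarrow X_n$ is a cofibration, the subspace of $X_n\times\Delta^n_{\Top}$ lying over $L_nX$ admits a deformation retract compatible with the face and degeneracy identifications, and this is precisely the discrepancy between the two pushouts. Applying this to both pushout squares and comparing levelwise gives that $q^{(n)}$ is a weak equivalence.

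The main technical obstacle is the bookkeeping for this deformation retract of the degenerate part: one must check that the pieces indexed by various degeneracy operators $s_i$ can be collapsed consistently, avoiding double-counting on the overlaps $s_i(X_{n-1})\cap s_j(X_{n-1})$. The simplicial identities organize these overlaps, and the closed Hurewicz cofibration condition supplies the needed neighborhood deformation retracts at each intersection. To finish, I would pass to the colimit over $n$: the skeletal inclusions on both sides are closed cofibrations (again by goodness), so weak equivalences are preserved in the filtered colimit, and $q$ itself is a weak equivalence.
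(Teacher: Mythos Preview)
The paper does not prove this lemma; it is simply quoted from Segal's appendix with a citation and no argument. So there is no in-paper proof to compare against, and the question is whether your sketch stands on its own.

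Your overall plan---filter both realizations by skeleta and induct using the attaching pushouts---is the standard route and is how the result is treated in modern references. The identification of the two pushouts and the observation that goodness makes $L_nX\hookrightarrow X_n$ a closed cofibration are correct. However, the step you yourself flag as ``the main technical obstacle'' is a genuine gap, not just bookkeeping. A direct levelwise comparison of the two pushout diagrams does \emph{not} go through the gluing lemma: the attaching spaces are $X_n\times\partial\Delta^n$ on the thick side and $X_n\times\partial\Delta^n\cup_{L_nX\times\partial\Delta^n} L_nX\times\Delta^n$ on the thin side, and the inclusion of the former into the latter is not a weak equivalence (since $\partial\Delta^n\hookrightarrow\Delta^n$ is not). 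So you cannot simply ``compare levelwise''.

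What is actually needed is to show that inside $\lVert X\rVert^{(n)}$ the cells contributed by degenerate $n$-simplices can be deformed into $\lVert X\rVert^{(n-1)}$ compatibly with $q^{(n-1)}$. You assert this follows from $L_nX\hookrightarrow X_n$ being a cofibration, but that hypothesis alone does not manufacture the retraction: it only provides NDR data near $L_nX$ in $X_n$, whereas the retraction you need takes place after gluing and must move points of $L_nX\times\Delta^n$ across the face identifications into lower skeleta. Building it requires an inner induction over the degeneracy operators $s_0,\dots,s_{n-1}$, using the simplicial identities to handle the overlaps $s_iX_{n-1}\cap s_jX_{n-1}$ and the NDR structure to patch the local deformations. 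That argument is the entire content of the lemma, and it is not yet present in your proposal.
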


Because the semi-simplicial spaces we consider are often the nerve of a category, we'd like to introduce a criterion that determines whether the semi-simplicial and simplicial nerves of a category are equivalent.
We will say that a topological category $\C$ is \emph{well-pointed} if its nerve $N\C$ is a good simplicial space. 

\begin{defi}\label{NDRgood}
For $B$ a topological space, let $\mathrm{CGH}/B$ be the category of compactly generated Hausdorff spaces over $B$. 
A neighbourhood-deformation-retract pair (or NDR-pair) over $B$ is a pair $(X,A)$ in $\mathrm{CGH}/B$, with $A\subset X$, such that there exist maps $u:X\rightarrow I\times B$ and $h: X\times I\rightarrow X$ over $B$, i.e. commutative diagrams:
\begin{center}
\begin{minipage}{0.4\linewidth}
\begin{center}
    \begin{tikzcd}
    X\arrow[dr]\arrow[rr,"u"] && I\times B \arrow[dl,"pr_2"]\\
    &B
    \end{tikzcd}
\end{center}
\end{minipage}
\begin{minipage}{0.4\linewidth}
\begin{center}
    \begin{tikzcd}
    X\times I\arrow[dr]\arrow[rr,"h"] && X \arrow[dl]\\
    &B.
    \end{tikzcd}
\end{center}
\end{minipage}
\end{center}
satisfying the following conditions:
\begin{enumerate}
    \item $A = u^{-1}(\{0\}\times B)$,
    \item $h(-,0) = \id_X$,  $h\vert_{A\times I} = pr_A$,
    \item $h(x,1)\in A$ for $x\in u^{-1}([0,1)\times B)$.
\end{enumerate}
Note that the map $X\times I\rightarrow B$ does not have to be the composition of the projection on $X$ and then the map $X\rightarrow B$.
\end{defi}

\begin{prop}\cite[Proposition~10]{TopQuillenA}
Let $\C$ be a topological category. If $(\Mor(\C), \Ob(\C))$ is an NDR-pair over $\Ob(\C)\times \Ob(\C)$, then $N\C$ is a good simplicial space.
\end{prop}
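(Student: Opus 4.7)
My plan is to invoke the classical characterization of closed Hurewicz cofibrations by NDR-pair structures (a subspace inclusion $A \hookrightarrow X$ is a closed Hurewicz cofibration if and only if $(X, A)$ admits an NDR-pair structure with $B$ a point) and to produce such a structure on each degeneracy inclusion $s_i : (N\C)_{p-1} \hookrightarrow (N\C)_p$ by transporting the given NDR data along a single coordinate.

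Writing $(u, h)$ for the NDR data on $(\Mor(\C), \Ob(\C))$ over $\Ob(\C) \times \Ob(\C)$, and recalling that $(N\C)_p \subset \Mor(\C)^p$ is the subspace of composable chains $(f_1, \ldots, f_p)$, I would define $U : (N\C)_p \to I$ by $U(f_1, \ldots, f_p) := \pr_I u(f_{i+1})$, where $\pr_I$ projects onto the $I$-factor of $I \times \Ob(\C) \times \Ob(\C)$, and $H : (N\C)_p \times I \to (N\C)_p$ by
\[ H\bigl((f_1, \ldots, f_p), \lambda\bigr) := (f_1, \ldots, f_i,\, h(f_{i+1}, \lambda),\, f_{i+2}, \ldots, f_p). \]
The essential check is that $H$ lands in $(N\C)_p$, i.e., that the deformed chain is still composable. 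Since $h$ is a map over $\Ob(\C) \times \Ob(\C)$, the morphism $h(f_{i+1}, \lambda)$ has the same source and target as $f_{i+1}$ for every $\lambda \in I$, so the composability relations that hold for the original chain still hold after replacing $f_{i+1}$ by $h(f_{i+1}, \lambda)$. This is precisely where the ``over $\Ob(\C) \times \Ob(\C)$'' part of the hypothesis is used; without it, the homotopy could drift off the fiber product.

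The three NDR conditions for the pair $\bigl((N\C)_p,\, s_i((N\C)_{p-1})\bigr)$ then follow directly from the corresponding conditions on $(\Mor(\C), \Ob(\C))$: one has $s_i((N\C)_{p-1}) = U^{-1}(0)$ because $f_{i+1}$ is an identity iff $u(f_{i+1}) \in \{0\} \times \Ob(\C) \times \Ob(\C)$; $H(-, 0) = \id$ and $H$ restricts to the projection on $s_i((N\C)_{p-1}) \times I$ coordinatewise; and $H(-, 1)$ sends $U^{-1}([0, 1))$ into $s_i((N\C)_{p-1})$ by the analogous property of $h$. Consequently each degeneracy is a closed Hurewicz cofibration and $N\C$ is good. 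The argument has no real obstacle: its content lies entirely in the observation that the fiberwise form of the NDR hypothesis is exactly what is needed to keep the homotopy inside the space of composable chains.
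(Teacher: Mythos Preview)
The paper does not prove this proposition; it is quoted without proof from an external reference. So there is no argument in the paper to compare against, and I assess your proof on its own merits.

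There is a genuine gap. Your argument hinges on the claim that ``since $h$ is a map over $\Ob(\C)\times\Ob(\C)$, the morphism $h(f_{i+1},\lambda)$ has the same source and target as $f_{i+1}$ for every $\lambda$.'' That inference would be valid only if the structure map $\Mor(\C)\times I\to\Ob(\C)\times\Ob(\C)$ were the composite $\Mor(\C)\times I\xrightarrow{\pr}\Mor(\C)\xrightarrow{(s,t)}\Ob(\C)\times\Ob(\C)$. But Definition~\ref{NDRgood} explicitly warns that ``the map $X\times I\rightarrow B$ does not have to be the composition of the projection on $X$ and then the map $X\rightarrow B$,'' and the paper's own application in Lemma~\ref{goodnerve} exhibits exactly this phenomenon: there the homotopy $h$ truncates a path, the structure map on $\Mor(\C_N)\times I$ sends $(\phi,t_x,t_y,s)$ to $\bigl(\phi(t_x),\phi(st_x+(1-s)t_y)\bigr)$, and hence the \emph{target} of $h(f_{i+1},\lambda)$ moves with $\lambda$. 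With that NDR data your deformed tuple $(f_1,\dots,f_i,h(f_{i+1},\lambda),f_{i+2},\dots,f_p)$ is not composable for $\lambda>0$, so your $H$ does not land in $(N\C)_p$.

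In other words, the fiberwise hypothesis is weaker than you are using: it does not pin $h$ to a fixed fiber of $(s,t)$, it only records continuously \emph{which} fiber $h(-,\lambda)$ lands in. A correct proof has to exploit that recorded drift rather than assume it away---for instance by realizing $s_i(N_{p-1}\C)\hookrightarrow N_p\C$ as the base change of $\Ob(\C)\hookrightarrow\Mor(\C)$ along a suitable map to $\Ob(\C)\times\Ob(\C)$ and arguing that NDR-pairs \emph{over} $B$ are stable under such base change, or equivalently by compensating for the moving endpoint of $h(f_{i+1},\lambda)$ with a matching modification of the adjacent morphism. Your single-coordinate deformation does neither.
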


\section{A topological category}

Let us define a topological category which serves as a model for the classifying space of $\displaystyle \sqcup \Sigma_n$. 
Objects are configurations of points in $I^N$ for large $N$ and morphisms between configurations of the same number of points are paths in $I^N\times \mathbb{R}$ which start at one configuration, end at another, and never intersect each other.\\
In the product $I^N\times\mathbb{R}$, the second factor $\mathbb{R}$ embodies a ``time'' dimension which is always increasing.

\begin{defi}
Let $N$ be in $\mathbb{N}\cup\{\infty\}, N\geq 3$. 
Let $\C_N$ be the topological category where:
\begin{itemize}
    \item the space of objects is $\displaystyle \bigsqcup_n \UConf (n, I^N)\times\mathbb{R}$.
    \item the space of morphisms is the space of all $(t_x,t_y,\phi)$
    where $t_x,t_y\in\mathbb{R}$, $\phi:[t_x,t_y] \rightarrow \UConf(n,I^N)\times\mathbb{R}$ for some $n$ and:
    $$\forall t\in [t_x,t_y], \phi(t) = (z_\phi(t), t),$$
    for some continuous map $z:\mathbb{R}\rightarrow \UConf(n,I^N)$.
    
\end{itemize}
The source and target maps $s,t: \Mor(\C_N)\rightarrow \Ob(\C_N)$ are given by:
\begin{align*}
s(t_x, t_y,\phi) &= \phi(t_x)\\
t(t_x, t_y, \phi) &= \phi(t_y).
\end{align*}
The space of morphisms from $(x,t_x)$ to $(y,t_y)$ for $\vert x\vert \neq \vert y\vert$ or $t_x>t_y$ is empty.
Composition of $(\phi, t_x, t_y)$ and $(\psi,t_y,t_z)$ is given by the concatenation of $\phi$ and $\psi$ denoted by $(\phi,\psi)$. More explicitly:
\begin{align*}
(\phi, \psi): [t_x,t_z] &\rightarrow I^N\times \mathbb{R}\\
t &\mapsto \phi(t) \text{ if } t_x\leq t\leq t_y \\
t&\mapsto \psi(t) \text{ if } t_y\leq t\leq t_z.
\end{align*}
\end{defi}

\begin{rema}
    The category $\C_N$ is the coproduct $\displaystyle \sqcup_{n} \C_N(n)$, where $\C_N(n)$ is the full subcategory of $\C_N$ obtained by restricting the space of objects to $\UConf(n,I^N)$.
\end{rema}

\begin{rema}
    The category $\C_N$ has an $E_N$-structure which induces an $E_N$-algebra structure on its classifying space.
\end{rema}

We have introduced this category because we are interested in its classifying space. 
Since semi-simplicial sets are easier to work with, let us check that $\C_N$ behaves well with respect to realizations, i.e. that it is well-pointed.

\begin{lem}\label{goodnerve}
Let $N\in\mathbb{N}$ or $N=\infty$. The nerve of the category $\C_N$ is good (i.e., the category $\C_N$ is well-pointed).
\end{lem}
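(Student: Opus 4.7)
The strategy is to invoke the NDR-pair criterion from the proposition cited just before the lemma: it suffices to exhibit an NDR-pair structure on $(\Mor(\C_N), \Ob(\C_N))$ over $B := \Ob(\C_N)\times\Ob(\C_N)$. Here $\Ob(\C_N)$ sits inside $\Mor(\C_N)$ as the identity morphisms, via $(x,t_x)\mapsto (t_x,t_x,\mathrm{const}_{(x,t_x)})$, and the structural map $\Mor(\C_N)\to B$ is the source-target pair $(s,t)$.

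The guiding observation is that the ``defect'' of a morphism $(t_x,t_y,\phi)$ from being an identity is measured by its length $t_y-t_x\geq 0$, which vanishes exactly on identities. I would therefore set
\[
u(t_x,t_y,\phi) \;=\; \bigl(\min(t_y-t_x,\,1),\; \phi(t_x),\; \phi(t_y)\bigr)\;\in\; I\times B,
\]
whose projection to $B$ is indeed $(s,t)$, and define the deformation
\[
h\bigl((t_x,t_y,\phi),r\bigr) \;=\; \Bigl(t_x,\; t_x+(1-r)(t_y-t_x),\; \phi|_{[t_x,\,t_x+(1-r)(t_y-t_x)]}\Bigr),
\]
which collapses any morphism onto the identity at its source as $r$ runs from $0$ to $1$. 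The value $h(\phi,0)=\phi$ recovers the original morphism, $h$ visibly fixes identities pointwise (since then the interval $[t_x,t_y]$ is a single point), and $h(\phi,1) = (t_x,t_x,\mathrm{const}_{\phi(t_x)})$ is an identity for every morphism $\phi$. This $h$ is ``over $B$'' only in the loose sense allowed by definition \ref{NDRgood}: the source is preserved but the target slides along the path $\phi$ as $r$ varies, which is explicitly permitted by the parenthetical remark there.

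With $u$ and $h$ in hand, the three NDR conditions of definition \ref{NDRgood} are now immediate: (1) $u^{-1}(\{0\}\times B)$ is exactly the locus $t_x=t_y$, i.e.\ the image of $\Ob(\C_N)$; (2) $h(-,0)=\id$ and $h|_{\Ob(\C_N)\times I}$ is the projection; (3) $h(-,1)$ lands in $\Ob(\C_N)$ unconditionally, hence a fortiori on $u^{-1}([0,1)\times B)$. The only real point to check is the continuity of $h$, which requires that the restriction of a path to the subinterval $[t_x,\,t_x+(1-r)(t_y-t_x)]$ vary continuously as $(t_x,t_y,\phi,r)$ varies; this is the only place one must use the standard convention for topologising the space of paths with varying domains (for instance via the Moore-path construction), and is the mild technical obstacle the argument hinges on. Everything else is bookkeeping.
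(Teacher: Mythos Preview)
Your proof is correct and is essentially identical to the paper's own argument: the paper uses the same $u(t_x,t_y,\phi)=(\min(t_y-t_x,1),\phi(t_x),\phi(t_y))$ and the same truncation homotopy $h$ (written as $(\phi|_{[t_x,\,st_x+(1-s)t_y]},t_x,st_x+(1-s)t_y)$, which coincides with your formula after expanding $t_x+(1-r)(t_y-t_x)=rt_x+(1-r)t_y$). Your remark about the target sliding along $\phi$ matches the paper's explicit choice of structure map $\Mor(\C_N)\times I\to \Ob(\C_N)^2$, and your comment on continuity is the only point where you add a bit more detail than the paper does.
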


\begin{proof}
We want to define maps $u$ and $h$ satisfying the conditions of definition \ref{NDRgood}.\\
The space of objects of $\C_N$, denoted $\Ob(\C_N)$, is identified inside $\Mor(\C_N)$ as the space of identities $\Id(\C_N)$. 
Note that the map $\Mor(\C_N)\rightarrow \Ob(\C_N)\times\Ob(\C_N)$ is the source-target map, 
while the map $\Mor(\C_N)\times I\rightarrow \Ob(\C_N)\times\Ob(\C_N)$ sends $(\phi,t_x,t_y,s)$ to $(\phi(t_x),\phi(st_x+(1-s)t_y))$.
Let 
\begin{align*}
h: \Mor(\C_N)\times I &\longrightarrow \Mor(\C_N)
\end{align*}
be a homotopy which progressively truncates a path until it has length 0.
More precisely, it takes a tuple $(\phi, t_x, t_y, s)$ to the tuple $(\phi\vert_{[t_x; st_x+(1-s)t_y]},t_x,st_x+(1-s)t_y)$.
We note that $h(\phi, t_x,t_y, 0) = (\phi,t_x,t_y)$, $h(\id_{(x,t_x)},s) = \id_{(x,t_x)}$, and $h(\phi, t_x, t_y, 1) = \id_{\phi(t_x)} \in\Id(\C_N)$.

Let us set
\begin{align*}
    u: \Mor_{\id}(\C_N) &\longrightarrow I\times \Ob(\C_N)\times\Ob(\C_N)\\
    (\phi,t_x,t_y) &\longmapsto \left(\min(t_y-t_x,1),\phi(t_x), \phi(t_y)\right)
\end{align*}

These two maps make both diagrams in definition \ref{NDRgood} commute. 
Moreover,
\[u^{-1}(\{0\}\times \Ob(\C_N)\times \Ob(\C_N)) = \Id(\C_N).\]
Therefore, the category $\C_N$ is well-pointed.
\end{proof}

This category is a topological model for the symmetric groups: 
% The idea behind this is that the topology of the paths between configurations in $\mathbb{R}^N$ vanishes when $N$ goes to infinity, and so the only remaining information is the underlying permutation of points.
\begin{prop}\label{homeq}
    For every $N$, there is an $(N-1)$-connected map of $E_N$-algebras
    \[B\C_N\rightarrow\bigsqcup B\Sigma_n.\]
\end{prop}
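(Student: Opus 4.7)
The plan is to split $\C_N$ by cardinality, identify the classifying space of each component with an unordered configuration space of $I^N$, and then appeal to standard connectivity bounds for configuration spaces.

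First, since $\C_N = \bigsqcup_n \C_N(n)$, its classifying space decomposes as $B\C_N = \bigsqcup_n B\C_N(n)$, and it suffices to produce an $(N-1)$-connected map $B\C_N(n) \to B\Sigma_n$ for each $n$.

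The geometric heart of the argument is to show that $B\C_N(n) \simeq \UConf(n, I^N)$. Using Lemma~\ref{goodnerve} together with Lemma~\ref{realizations}, I may replace the thick realization $\|N\C_N(n)\|$ by the thin realization $|N\C_N(n)|$. A $k$-simplex of $N\C_N(n)$ corresponds to a path $\phi\colon [t_0,t_k] \to \UConf(n, I^N)$ together with a subdivision $t_0 \leq \cdots \leq t_k$, so I would define a ``weighted-time'' map
\[
F\colon |N\C_N(n)| \longrightarrow \UConf(n, I^N), \qquad \bigl[(t_0,\ldots,t_k,\phi),(s_0,\ldots,s_k)\bigr] \longmapsto \phi\Bigl(\textstyle\sum_j s_j t_j\Bigr),
\]
and check that the weighted time $T := \sum_j s_j t_j$ is preserved by the simplicial identifications: inserting a zero weight at position $i$ matches deleting $t_i$ via a face map, and duplicating $t_i$ matches summing two adjacent weights via a degeneracy. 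A strong deformation retract of $|N\C_N(n)|$ onto its $0$-skeleton $\UConf(n, I^N) \times \mathbb{R}$ is then given by the homotopy $H_u$ that replaces each $t_j$ by $(1-u)t_j + uT$ and restricts $\phi$ accordingly; at $u=1$ every $t_j$ equals $T$, producing a degenerate simplex identified in the thin realization with the $0$-simplex $(\phi(T), T)$. The same invariance of $T$ guarantees that $H_u$ descends to $|N\C_N(n)|$. Composing the retraction $[x,p] \mapsto (\phi(T), T)$ given by $H_1$ with the projection $\UConf(n, I^N) \times \mathbb{R} \to \UConf(n, I^N)$ recovers $F$, which is therefore a homotopy equivalence.

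For the last step, the push-in $I^N \to (\epsilon, 1-\epsilon)^N \cong \mathbb{R}^N$ is a homotopy equivalence through injective maps, so $\UConf(n, I^N) \simeq \UConf(n, \mathbb{R}^N)$. The inclusion $\UConf(n, \mathbb{R}^N) \hookrightarrow \UConf(n, \mathbb{R}^\infty) \simeq B\Sigma_n$ is $(N-1)$-connected: for $N \geq 3$ both sides have $\pi_1 \cong \Sigma_n$, and on universal covers $\Conf(n, \mathbb{R}^N)$ is $(N-2)$-connected by an iterated Fadell--Neuwirth argument with fibers $\mathbb{R}^N \setminus \{n{-}1 \text{ pts}\} \simeq \bigvee_{n-1} S^{N-1}$, while $\Conf(n, \mathbb{R}^\infty)$ is contractible. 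The main obstacle is the algebraic check that both $F$ and $H_u$ respect every face and degeneracy identification on $|N\C_N(n)|$; once this is established, the remaining steps use only standard facts about configuration spaces.
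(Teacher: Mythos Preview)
Your argument is correct, but it takes a different route from the paper. The paper builds a simplicial map $N_\bullet\C_N \to \Sing_\bullet\bigl(\bigsqcup_n \UConf(n,I^N)\bigr)$ by identifying $\Mor(\C_N)$ with the Moore path space of $\bigsqcup_n\UConf(n,I^N)$ (Lemma~\ref{Moorepathseq}), then uses the Segal property to propagate the level~$0$ and~$1$ equivalences to all levels, and finally checks that $\Sing_\bullet$ is a good simplicial space so that thick and thin realizations agree. Your approach bypasses all of this: the barycentric-weighted time $T=\sum_j s_j t_j$ gives a direct evaluation map $F$ on the thin realization, and the linear interpolation $t_j\mapsto (1-u)t_j+uT$ furnishes an explicit strong deformation retraction onto the $0$-skeleton $\UConf(n,I^N)\times\mathbb{R}$. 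The compatibility with faces and degeneracies that you flag as ``the main obstacle'' does hold: $T$ is visibly invariant under inserting a zero weight or merging adjacent weights, and for the outer faces $d_0,d_k$ one only needs $T\in[t_1,t_k]$ (respectively $[t_0,t_{k-1}]$), which follows since on those faces $T$ is a convex combination of the remaining $t_j$. Your route is more elementary (no Moore paths, no goodness of $\Sing_\bullet$), at the cost of a hands-on simplicial check; the paper's route is more structural and would transport more readily to variants where an explicit contraction is less obvious. Both finish with the same $(N-1)$-connectivity of $\UConf(n,\mathbb{R}^N)\hookrightarrow\UConf(n,\mathbb{R}^\infty)$, which you justify cleanly via Fadell--Neuwirth.
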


Before we move on to the proof of proposition \ref{homeq}, let us briefly introduce the space of Moore paths of a topological space.
\begin{defi}
Let $X$ be a topological space. Let \[P^{\Moore}X := \{(f,r)\vert f:[0,r]\rightarrow X, r\geq 0\}.\] 
This space can be viewed as a subspace of $\Top([0,\infty),X)\times\mathbb{R}$ by setting $f(u) = f(r)$ for $u\geq r$.
\end{defi}
\begin{lem}\label{Moorepathseq}
The space of Moore paths $P^{\Moore}X$ is homotopy equivalent to the space of paths $PX = \{\gamma:I\rightarrow X\}$.
\end{lem}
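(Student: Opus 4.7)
The plan is to exhibit $PX$ as a strong deformation retract of $P^{\Moore} X$ via a linear rescaling of the length parameter.

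First, I would introduce two obvious maps. The inclusion
\[\alpha \colon PX \longrightarrow P^{\Moore} X, \qquad \gamma \longmapsto (\gamma,1),\]
(viewing $\gamma$ as a map $[0,1]\to X$), and the normalisation
\[\beta \colon P^{\Moore} X \longrightarrow PX, \qquad (f,r)\longmapsto \bigl(t\mapsto f(rt)\bigr),\]
where, in the corner case $r = 0$, $\beta(f,0)$ is interpreted as the constant path at $f(0)$. A direct check shows $\beta\circ\alpha = \id_{PX}$. The image of $\alpha$ is precisely the subspace $S = \{(f,1)\mid f\colon [0,1]\to X\}\subset P^{\Moore} X$, which is visibly homeomorphic to $PX$.

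Next, I would construct a homotopy $H\colon P^{\Moore} X \times I \to P^{\Moore} X$ from $\id$ to $\alpha\circ\beta$ by linearly deforming the length $r$ to $1$ and reparametrising accordingly. Concretely, set
\[r_s := (1-s)r + s, \qquad f_s(t) := f\!\left(\tfrac{r t}{r_s}\right) \text{ for } t\in[0, r_s],\]
and $H((f,r),s) := (f_s, r_s)$. Since $r_s$ is a convex combination of $r$ and $1$, one has $r_s > 0$ except at the single point $(r,s) = (0,0)$, where $f$ is anyway only defined at $0$ and one sets $f_0 = f$. At $s = 0$ one recovers $(f,r)$; at $s = 1$ one lands on $((t\mapsto f(rt)),1) = \alpha\beta(f,r)$; and an element $(\gamma,1)$ of $S$ is fixed throughout the homotopy. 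Hence $S$ is a strong deformation retract of $P^{\Moore} X$, giving the claimed equivalence.

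The only point requiring care is continuity of $H$ at $(r,s) = (0,0)$, where $r_s\to 0$ and the reparametrisation formula is formally of indeterminate type. This is handled using the embedding from the definition, $P^{\Moore}X \hookrightarrow \Top([0,\infty),X)\times\mathbb{R}$, under which the extended map reads
\[\tilde f_s(u) = f\!\left(\tfrac{r\min(u, r_s)}{r_s}\right), \qquad u\in[0,\infty).\]
The argument $r\min(u,r_s)/r_s$ lies in $[0,r]$ and tends to $0$ as $r\to 0$, so $\tilde f_s(u)\to f(0)$ uniformly in $u$, ensuring continuity in the compact-open topology on $\Top([0,\infty),X)$. This uniform bound is the one technical step; the rest of the verification is formal.
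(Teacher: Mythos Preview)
Your proof is correct and follows essentially the same strategy as the paper: exhibit $PX$ as a deformation retract of $P^{\Moore}X$ via the inclusion at length~$1$ together with a retraction, and check the two composites. The only difference is that the paper's retraction $h$ rescales Moore paths of length $r>1$ but \emph{extends} paths of length $r<1$ by their endpoint value rather than stretching them, which sidesteps the $r\to 0$ continuity issue you address explicitly; both variants are valid.
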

\begin{proof}
Let us consider the inclusion $i:PX \hookrightarrow P^{\Moore}X$.
Let us define the map \begin{align*}
h:P^{\Moore}X \rightarrow PX\end{align*} which renormalizes a Moore path of length $r>1$ to a path of length 1, and extends a path of length $r<1$ by $f(u)=f(r)$ for $u\in[r,1)$.
This map is such that $h\circ i = \id$ and $i\circ h \simeq \id$.
\end{proof}
\begin{prop}
For $X$ a path connected space, the path space $PX$ is homotopy equivalent to $X$.\qed
\end{prop}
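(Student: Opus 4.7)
The plan is to exhibit an explicit strong deformation retraction of $PX$ onto the subspace of constant paths, which is canonically homeomorphic to $X$. Concretely, I would define two maps and show they are homotopy inverses.

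First, the inclusion of constant paths, $c: X \to PX$, sends $x$ to the constant path $t \mapsto x$. Second, the evaluation map $ev_0: PX \to X$ sends $\gamma$ to $\gamma(0)$. One composition is trivial: $ev_0 \circ c = \id_X$ on the nose.

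For the other composition, I would write down the explicit homotopy
\[
H : PX \times I \longrightarrow PX, \qquad H(\gamma, s)(t) = \gamma\bigl((1-s)t\bigr).
\]
At $s=0$ this recovers $\gamma$, and at $s=1$ it is the constant path at $\gamma(0)$, so $H$ is a homotopy from $\id_{PX}$ to $c \circ ev_0$. Continuity of $H$ follows from the fact that $PX = \map(I,X)$ is given the compact-open topology and $(s,t) \mapsto (1-s)t$ is continuous on $I \times I$; equivalently, using the exponential adjunction, one may view $H$ as the continuous map $PX \times I \times I \to X$, $(\gamma,s,t) \mapsto \gamma((1-s)t)$.

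There is no real obstacle here; the argument is classical and formally identical to the standard proof that based path spaces are contractible, with the basepoint allowed to vary. I note that the path-connectedness hypothesis is not actually used in the argument and could be dropped, but since the preceding \Cref{Moorepathseq} compares $PX$ with $P^{\Moore}X$, the statement suffices for the intended application of identifying $P^{\Moore}X$ up to homotopy with $X$.
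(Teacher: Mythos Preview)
Your argument is correct and is the standard deformation-retraction proof. The paper itself gives no proof of this proposition---the trailing \qed\ marks it as a well-known fact---so there is nothing to compare against; your observation that path-connectedness is not actually used is also correct.
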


\begin{proof}[Proof of proposition \ref{homeq}.]
First, note that the space of objects $N_0\C_N$ is equivalent to $\bigsqcup_n \UConf(n,\mathbb{R}^N)$. 
The space of morphisms is equivalent to the Moore path space of $\bigsqcup \UConf(n,\mathbb{R}^N)$ via the following maps:
\begin{align*}
\Mor(\C_N) &\rightarrow P^{\Moore}\left(\bigsqcup \UConf(n,I^N)\right)\\
(\phi,t_x,t_y) &\mapsto (z_\phi, t_y-t_x)
\end{align*}
and
\begin{align*}
    P^{\Moore}\left(\bigsqcup \UConf(n,I^N)\right) &\rightarrow \Mor(\C_N)\\
    (\phi,r) &\mapsto ((\phi(t),t),0,r).
\end{align*}

We can extend these maps to a simplicial map from $N_\bullet \C_N$ into the singular simplicial set $\Sing_\bullet( \sqcup \UConf(n,I^N)) := \map(\Delta^\bullet_{\Top}, \displaystyle \sqcup \UConf(n,I^N))$.
As stated earlier, this map is an equivalence in levels 0 and 1. Because both $N_p\C_N$ and the singular simplicial space are Segal spaces, this equivalence extends to all levels.
Therefore, the simplicial map is a levelwise equivalence, and by lemma \ref{levelwisereal}, it induces an equivalence on thick realizations.

To compare this with the regular realization, we need to prove that $\Sing_\bullet(X)$ is a good simplicial space for $X$ a metric space.
The degeneracies $s_i:\Sing_n(X)\rightarrow \Sing_{n+1}(X)$ are induced by the cosimplicial maps $\sigma^i:\Delta^{n+1}_{\Top}\twoheadrightarrow \Delta^n_{\Top}$.
We can build an explicit neighbourhood deformation retraction of $\map(\Delta^{n+1}_{\Top},X)$ onto $\map(\Delta^{n}_{\Top},X)$ via the maps:
\begin{align*}
h: \map(\Delta^{n+1}_{\Top},X)\times I&\rightarrow \map(\Delta^{n+1}_{\Top},X)\\
(f,s)&\mapsto f(x_0,\dots, x_i+sx_{i+1}, (1-s)x_{i+1}, \dots, x_{n})\\
u:\map(\Delta^{n+1}_{\Top},X)&\rightarrow I\\
f&\mapsto \sup_{\underline{t}\in\Delta^n_{\Top}}\sup_{\underline{s}\in(\sigma^i)^{-1}(t)} d(f(\underline{s}),f(\underline{t})).
\end{align*}
Therefore, the maps $s_i(\Sing_n(X))\hookrightarrow \Sing_{n+1}(X)$ are cofibrations and $\Sing_\bullet(X)$ is a good simplicial space.
There is therefore an equivalence: \[B\C_N\simeq \vert \Sing_\bullet(\sqcup \UConf(n,\mathbb{R}^N))\vert \simeq \sqcup \UConf(n,\mathbb{R}^N).\]
Now, the inclusion $\UConf(n,\mathbb{R}^N)\hookrightarrow \UConf(n,\mathbb{R}^\infty)$ is $(N-1)$-connected, and $\UConf(n,\mathbb{R}^\infty)$ is a model for $B\Sigma_n$ as previously stated, so the proposition follows.
\end{proof}

\section{Delooping result}
In this section, we will ``zoom in'' on the classifying space of our category in order to find an expression of it as a loop space.

\subsection{Resolution of the classifying space of the category $\C_\infty$}

\begin{defi}\label{weirdtop}
    Fix some $n,N\in\mathbb{N}$, $0\leq k \leq N$, and $J$ a subinterval of $\mathbb{R}$. Let $\Psi_n(J\times \mathbb{R}^{k}\times I^{N-k})$ be the space of all $\phi : J \rightarrow \mathbb{R}\times \UConf(n,\mathbb{R}^{k}\times I^{N-k})$ satisfying the following condition:
    $$\forall t\in J,\ \phi(t) = (t, z_\phi(t)) \text{ for some } z_\phi:J \rightarrow \UConf(n,\mathbb{R}^{k}\times I^{N-k}).$$
    \end{defi}

  For $\phi\in \Psi_n(J\times \mathbb{R}^{k}\times I^{N-k})$ and $P$ any submanifold of $J\times\mathbb{R}^{k}\times I^{N-k}$ of the form $J'\times P'$ where $J'\subset J$ and $P'$ is a submanifold of $\mathbb{R}^{k}\times I^{N-k}$, we denote by $\phi\cap P$ the intersection $\im(\phi)\cap P$.
    \begin{defi}
    Let $P$ be a submanifold of $J\times\mathbb{R}^{k}\times I^{N-k}$ of the form $J'\times P'$, where $J'$ is a subinterval of $J$ and $P'$ a submanifold of $\mathbb{R}^{k}\times I^{N-k}$. 
    We say that $\phi\sim_P \psi$ if $\phi\cap P = \psi\cap P$.
    Let 
    \[\Phi_k^N(J\times\mathbb{R}^{k}\times I^{N-k},P) := \bigg( \displaystyle\bigsqcup_n \Psi_{n}(J\times\mathbb{R}^{k}\times I^{N-k})\bigg)/ \sim_P.\]
   We will apply this to the case where $J'_R = J\cap [-\frac R2;\frac R2]$ and $P'$ is the cube $B_R^{k}(0)$ of center 0 and side $R$ in dimension $k$.
    For $0\leq k\leq N$, let us set $$\Phi_{k}^{N}(J) := \lim_{R\in\mathbb{N}} \Phi_k^N(J\times\mathbb{R}^{k}\times I^{N-k}, J'_R\times B_R^{k}(0) \times I^{N-k}).$$
    When $J=\mathbb{R}$, we omit it from the notation.
    \end{defi}
    
    \begin{figure}[h!]
    \tiny
    \def\svgwidth{\hsize}
    \begingroup%
    \makeatletter%
    \providecommand\color[2][]{%
      \errmessage{(Inkscape) Color is used for the text in Inkscape, but the package 'color.sty' is not loaded}%
      \renewcommand\color[2][]{}%
    }%
    \providecommand\transparent[1]{%
      \errmessage{(Inkscape) Transparency is used (non-zero) for the text in Inkscape, but the package 'transparent.sty' is not loaded}%
      \renewcommand\transparent[1]{}%
    }%
    \providecommand\rotatebox[2]{#2}%
    \newcommand*\fsize{\dimexpr\f@size pt\relax}%
    \newcommand*\lineheight[1]{\fontsize{\fsize}{#1\fsize}\selectfont}%
    \ifx\svgwidth\undefined%
      \setlength{\unitlength}{496.06299213bp}%
      \ifx\svgscale\undefined%
        \relax%
      \else%
        \setlength{\unitlength}{\unitlength * \real{\svgscale}}%
      \fi%
    \else%
      \setlength{\unitlength}{\svgwidth}%
    \fi%
    \global\let\svgwidth\undefined%
    \global\let\svgscale\undefined%
    \makeatother%
    \begin{picture}(1,0.55428571)%
      \lineheight{1}%
      \setlength\tabcolsep{0pt}%
      \put(0,0){\includegraphics[width=\unitlength,page=1]{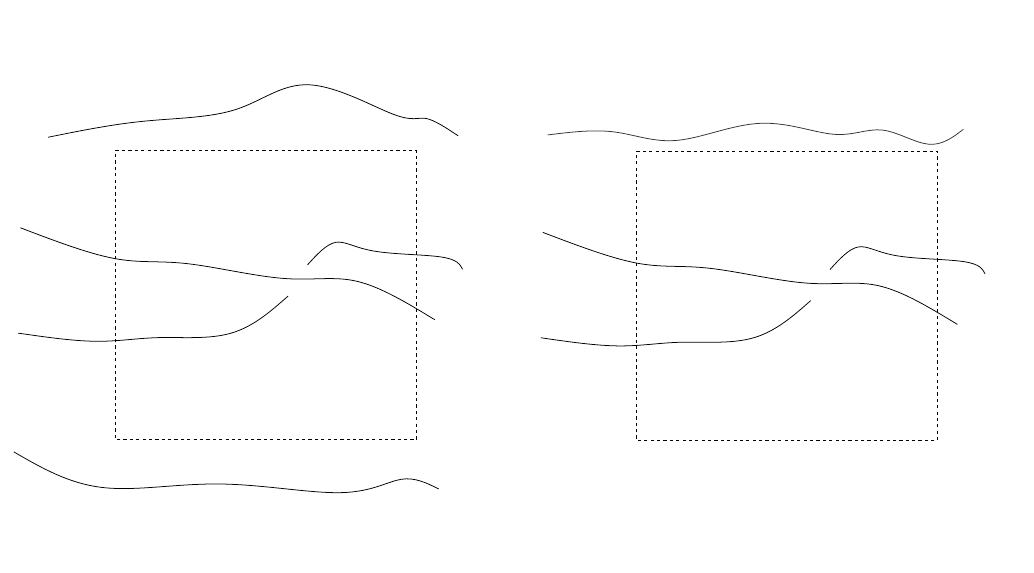}}%
    \end{picture}%
  \endgroup%
  
    \caption{Two elements in $\Phi_1^1$ that are connected by a path.}
    \label{twoelements}
    \end{figure}
    We can think of $\Phi_0^N$ as a space of long paths of configurations that we only need to understand on every compact interval.

    For each $k\geq 0$, there is a point at infinity in $\Phi^{N}_k$, which corresponds to the empty path. It is the basepoint of this space.
    \begin{prop}\label{pathconnected}
        For $k\geq 1$, the space $\Phi_{k}^{N}$ is path-connected.
    \end{prop}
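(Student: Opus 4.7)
The plan is to show every element of $\Phi_k^N$ is path-connected to the basepoint (the class of the empty path) by translating in the unbounded spatial direction of $\mathbb{R}^k$, which is available because $k \geq 1$. Let $e_1$ denote the first standard basis vector of $\mathbb{R}^k$.

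First, I would make sense of translation on $\Phi_k^N$. For $\phi \in \Psi_n(\mathbb{R} \times \mathbb{R}^k \times I^{N-k})$ and $v \in \mathbb{R}^k$, let $\phi + v$ be the path obtained by replacing each $z_\phi(t)$ by $z_\phi(t) + v$; this is continuous in $(\phi, v)$. Although translation does not preserve any fixed window $P_R = J'_R \times B_R^k(0) \times I^{N-k}$, whenever $R' \geq R + |v|$ we have $P_R - v \subseteq P_{R'}$, so the induced map on representatives descends to a continuous map $\Phi_k^N(\cdot, P_{R'}) \to \Phi_k^N(\cdot, P_R)$. Assembling these across $R$ yields a continuous translation action $\Phi_k^N \times \mathbb{R}^k \to \Phi_k^N$.

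Given $[\phi] \in \Phi_k^N$ with a representative $\phi \in \Psi_n$, I would define
\[H(s) := [\phi + \tan(\pi s/2)\, e_1] \text{ for } s \in [0, 1), \qquad H(1) := *.\]
Continuity on $[0, 1)$ is immediate from continuity of translation. For continuity at $s = 1$, it suffices to check each projection to $\Phi_k^N(\cdot, P_R)$. The key observation is that $\bigcup_{t \in [-R/2,\, R/2]} z_\phi(t)$ is the continuous image of a compact set, hence compact in $\mathbb{R}^k \times I^{N-k}$ with bounded projection to $\mathbb{R}^k$. So translating by $\tan(\pi s/2)\, e_1$ for $s$ close enough to $1$ pushes the restricted image entirely outside $B_R^k(0) \times I^{N-k}$, making $H(s)$ the basepoint at level $R$. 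Each projection of $H$ is therefore continuous, so $H$ is continuous into the inverse limit.

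The main obstacle I anticipate is dealing with elements of $\Phi_k^N$ that do not lift to a single path in $\bigsqcup_n \Psi_n$ but only to a compatible family $([\phi_R])_R$ (with possibly unboundedly many points as $R$ grows). For such an element, the translation must be applied levelwise, using a representative $\phi_{R'}$ with $R'$ chosen large relative to the target level $R$ and the translation amount; one then verifies using the same compactness argument that the resulting family is compatible across $R$ and depends continuously on $s$. The bookkeeping is somewhat delicate but is essentially the single-representative argument in disguise.
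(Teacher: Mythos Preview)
Your approach---translate along an unbounded $\mathbb{R}^k$-direction until every window $P_R$ has been vacated---is exactly the paper's one-line argument, and your treatment of elements admitting a single representative $\phi \in \bigsqcup_n \Psi_n$ is a correct, more detailed elaboration of it.

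The gap is in your final paragraph. For a compatible family $([\phi_R])_R$ with unboundedly many paths the compactness step genuinely fails: translating by $\tan(\pi s/2)\,e_1$ can bring new paths into the window $P_R$ just as fast as the old ones leave it. Concretely, take the inverse-limit element with one time-constant path at $e_1 = -m$ for each $m \in \mathbb{N}$; after translating by $v(s) := \tan(\pi s/2)$ the window $P_R$ contains precisely the paths with $m \in [v(s)-R/2,\,v(s)+R/2]$, which is nonempty for all large $s$, so the projection of $H(s)$ to level $R$ never becomes the basepoint and $H$ is discontinuous at $s=1$. Thus the ``same compactness argument'' does not go through levelwise. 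The paper's proof does not address this case either, and elsewhere in the paper elements of $\Phi_k^N$ are always manipulated as single functions $\phi$ (e.g.\ the use of $\phi(0)$ in the analysis of $\Phi_N^N$), so the statement should be read for the image of $\bigsqcup_n \Psi_n$ in the inverse limit; there your main argument already suffices and your last paragraph is unnecessary.
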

    
    \begin{proof}
    When $k\geq 1$, we build a path between any element in $\Phi_{k}^N$ and the point at infinity by progressively sliding it away along the first dimension (or any dimension which is allowed to go to infinity, i.e. between $1$ and $k-1$).
    \end{proof}

Let us now give a technical lemma which will be useful for the proof of proposition \ref{deloopinglevelone}.

\begin{lem}\label{posetthing}
    Let $P$ be a subposet of $\mathbb{R}$.
    The classifying space of the topological poset $P$ is contractible.
\end{lem}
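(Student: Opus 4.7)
The plan is to contract $BP$ via a natural transformation argument that exploits the fact that $P$ is totally ordered and inherits a continuous max operation from $\mathbb{R}$. Assuming $P$ is nonempty (otherwise the statement is vacuous), I would first fix a basepoint $x_0 \in P$ and define an endofunctor $\phi : P \to P$ of the topological poset by $\phi(y) = \max(x_0, y)$. Because the maximum of two real numbers is always one of them, $\phi$ indeed lands in $P$; it is order-preserving by monotonicity of $\max$ in each argument, and continuous because $\max : \mathbb{R}^2 \to \mathbb{R}$ is continuous in the subspace topology inherited by $P$.

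The next step is to produce two continuous natural transformations relating $\phi$ to simpler functors. The first, $\eta : \id_P \Rightarrow \phi$, sends each object $y$ to the morphism $y \leq \max(x_0, y)$; the second, $\theta : c_{x_0} \Rightarrow \phi$, where $c_{x_0}$ denotes the constant functor at $x_0$, sends each $y$ to the morphism $x_0 \leq \max(x_0, y)$. Both are continuous as maps $P \to \Mor(P) \subset P\times P$, since their components $y \mapsto (y,\max(x_0,y))$ and $y \mapsto (x_0,\max(x_0,y))$ are continuous. Naturality is automatic in the setting of a poset: every square to be checked involves at most one morphism between any two objects, and the relevant inequality $\max(x_0,y_1)\leq \max(x_0,y_2)$ for $y_1\leq y_2$ is immediate.

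To conclude, I would invoke the standard fact that a continuous natural transformation between functors of topological categories repackages as a continuous functor $\C \times [1] \to \D$ and so, on passing to classifying spaces (with $B[1]\cong I$), induces a homotopy between the induced maps. Applied to $\eta$ and $\theta$, this yields $\id_{BP} = B\id_P \simeq B\phi \simeq Bc_{x_0}$, and the last map is constant at the vertex $[x_0]\in BP$. Therefore $\id_{BP}$ is homotopic to a constant map, so $BP$ is contractible.

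There is no serious obstacle: the only delicate point is checking that $\eta$ and $\theta$ are continuous with values in $\Mor(P)$, which is immediate from the continuity of $\max$ on $\mathbb{R}^2$, and the conversion of a natural transformation into a homotopy on classifying spaces is entirely routine for topological categories.
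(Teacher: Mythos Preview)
Your argument is correct and substantially simpler than the paper's. You exploit the total order on $P\subset\mathbb{R}$ directly: the endofunctor $\phi=\max(x_0,-)$ lands in $P$ because $\max(x_0,y)\in\{x_0,y\}$, and the two continuous natural transformations $\id_P\Rightarrow\phi\Leftarrow c_{x_0}$ contract $BP$ to a point. This is the classical ``directed poset'' contraction, and all the continuity checks go through exactly as you say.

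The paper instead proceeds by a discretisation argument. It introduces the underlying discrete poset $P^\delta$ and a bisimplicial space $D_{p,q}=N_{p+q+1}P$ mediating between $N_\bullet P$ and $N_\bullet P^\delta$; it then shows, via Proposition~\ref{levelwisereal}, that the augmentation $||D_{p,\bullet}||\to N_pP$ is a Serre microfibration with contractible fibers (these being nerves of discrete linearly ordered sets), and concludes that $||N_\bullet P||$ is contractible because the comparison factors through $||N_\bullet P^\delta||\simeq *$. Finally it checks well-pointedness of $P$ to pass from the thick to the thin realization. Your approach is more elementary and avoids the bisimplicial machinery entirely; the paper's approach has the virtue of staying within the microfibration/semi-simplicial toolkit used throughout, and of making the thick realization $\mathbb{B}P$ explicit.

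One small remark: as written, your argument establishes contractibility of the thin classifying space $BP$. The applications in the paper (the fibers in Propositions~\ref{deloopinglevelone} and~\ref{higherdelooping}) are thick realizations $\mathbb{B}P$. This is not a real obstacle---either observe that continuous natural transformations also induce homotopies on thick classifying spaces, or append the same two-line well-pointedness check the paper gives to obtain $\mathbb{B}P\simeq BP$.
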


\begin{proof}
Let us consider the discrete version of this poset, $P^\delta$. The identity map $i: P^\delta\hookrightarrow P$ is continuous.\\ %Poset thing
Let $D_{p,q} = N_{p+q+1}P$, viewed as a topological subspace of $N_pP\times N_qP^\delta$ via the map which takes a $(p+q+1)$-simplex in $NP$ and restricts it to its first $p$ coordinates on $NP$ and last $q$ coordinates on $NP^\delta$.
This yields a semi-bisimplicial space $D$, with maps:
\begin{align*}
    D_{p,q} &\xrightarrow{\alpha_{p,q}} N_pP\\
    D_{p,q} &\xrightarrow{\beta_{p,q}} N_qP^\delta.
\end{align*}

By proposition \ref{levelwisereal}, the map $\alpha_p: \vert\vert D_{p,\bullet}\vert\vert\rightarrow N_pP$ is a Serre microfibration. 
The fiber of $\alpha_p$ over $(t_0,\dots, t_p) \in N_pP$ is the discrete space \[\{q\in\mathbb{N}, (s_0,\dots, s_q)\in P^\delta\backslash\{t_0,\dots, t_p\} \}.\]
This is the nerve of the discrete ordered set $P^\delta\backslash\{t_0,\dots, t_p\}$, which is contractible. 
Therefore the map $\alpha_p$ is a homotopy equivalence. 
By proposition \ref{levelwisereal}, the map \[\vert\vert\alpha\vert\vert: ||D_{\bullet,\bullet}||\rightarrow ||N_\bullet P||\] is a homotopy equivalence. 
But $\vert\vert i\vert\vert \circ ||\beta||\simeq ||\alpha||$ \cite[Lemma~5.8]{grwI}, so $\vert\vert\alpha\vert\vert$ factors up to homotopy through the space $|\vert N_\bullet P^\delta|\vert$ which is contractible since it is the realization of the nerve of a discrete directed poset. 
Thus, both spaces $||D_{\bullet,\bullet}||$ and $||N_\bullet P||$ are contractible.
It now remains to show that $P$ is well-pointed as a topological poset.
We set 
\begin{align*}
    u:\Mor(P)\rightarrow I\times\Ob(P)\times\Ob(P) &&h:\Mor(P)\times I&\rightarrow \Mor(P)\\
    f_{x,y} \mapsto (\min(1,y-x),x,y) &&(f_{x,y},t)&\mapsto f_{x,tx+(1-t)y} 
\end{align*}
Therefore, the thick geometric realization of $P$ is equivalent to its regular geometric realization and the space $|N_\bullet P|$ is contractible. %End of poset thing
\end{proof}
% \begin{defi}
% For $N\in\mathbb{N}\cup \{\infty\}\cup\{-1\}$, $n,p\in\mathbb{N}$, and $J\subset\mathbb{R}$, let $\Lambda_{n,p}(J \times I^N)$ be the collection of tuples $(\phi,\sigma_1,\dots,\sigma_p, t_0,\dots, t_p)$, where $\phi \in \Psi_{n}(J \times I^N)$, $t_0 < \dots < t_p\in\mathbb{R}$ and $\sigma_i\in\Sigma_n$.
% We may note that $\Lambda_{n,-1}(J\times I^N) = \Psi_n(J\times I^N)$.
% \end{defi}

% \begin{defi}
% The collection of the $\Phi_p(J\times I^N)$ forms a semisimplicial space with face maps 
% \begin{align*}
% d_k: \Phi_{p}(J\times I^N)&\rightarrow \Phi_{p-1}(J\times I^N)\\
% (\phi, t_0,\dots, t_p,\sigma_1,\dots,\sigma_p) &\mapsto (\phi,t_0,\dots,\hat{t_k},\dots, t_p,\sigma_1,\dots,\sigma_{k-1}\circ\sigma_k,\dots,\sigma_p)
% \end{align*} 
% We denote its thick geometric realization by $\mathbbm{\Phi}(J\times I^N)$.
% \end{defi}

\begin{prop}\label{deloopinglevelone}
For $N\geq 1$, there is a zigzag of weak equivalences of $E_N$-algebras:
\[\Phi_0^N \simeq B\C_N.\]
\end{prop}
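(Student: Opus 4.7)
The plan is to construct a semi-simplicial space $X_\bullet$ that resolves $\Phi_0^N$ and also maps to the nerve $N_\bullet \C_N$, and to show that both resulting augmentations induce weak equivalences on thick realizations. Combined with Lemmas \ref{goodnerve} and \ref{realizations}, this will yield
\[
\Phi_0^N \xleftarrow{\simeq} ||X_\bullet|| \xrightarrow{\simeq} ||N_\bullet \C_N|| \simeq B\C_N.
\]
Concretely, let $X_p$ be the space of pairs $(\phi, t_0 < \ldots < t_p)$ with $\phi \in \Phi_0^N$ and each $t_i$ a \emph{regular time} for $\phi$, meaning that $\phi$ is represented by an honest path in some $\Psi_{n_i}$ on a neighborhood of $t_i$. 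The face maps forget a $t_i$. Viewing $\vec t$ as living in the semi-simplicial set $N_\bullet \mathbb{R}^\delta$ (discrete), the space $X_p$ is an open subspace of $N_p \mathbb{R}^\delta \times \Phi_0^N$ in every degree.

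For the left augmentation $\pi : ||X_\bullet|| \to \Phi_0^N$, which forgets $\vec t$, the second bullet of Proposition \ref{levelwisereal} gives that $\pi$ is a Serre microfibration. Its fiber over $\phi$ is the thick realization of the discrete nerve of the sub-poset $P_\phi \subset \mathbb{R}$ of regular times for $\phi$. Since $P_\phi$ is totally ordered (inheriting the order from $\mathbb{R}$), it is directed and its nerve is contractible, as in the proof of Lemma \ref{posetthing}. Weiss's lemma then implies $\pi$ is a weak equivalence.

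For the right augmentation, I would define $\mu : X_\bullet \to N_\bullet \C_N$ by sending $(\phi, t_0 < \ldots < t_p)$ to the chain of composable morphisms $(\phi\vert_{[t_{i-1}, t_i]})_{i=1}^p$ in $\C_N$. This is compatible with faces: forgetting an interior $t_i$ amounts to composing adjacent morphisms, and the extremal faces correspond to restricting to smaller intervals. I expect $\mu$ to be a levelwise weak equivalence, so that Proposition \ref{levelwisereal} upgrades it to an equivalence of thick realizations. The fiber of $\mu_p$ over a composable chain is the space of extensions of that chain to an element of $\Phi_0^N$ on the complement of $[t_0, t_p]$; such extensions should deformation retract onto a canonical one, for instance the extension by the empty configuration obtained by sliding the points off to infinity along a coordinate of $I^N$.

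The main obstacle will be this last point: rigorously controlling the germ-at-infinity equivalence relation defining $\Phi_0^N$ in order to show the extension spaces are contractible. The deformation retraction will likely combine a Moore-path-style reparametrization (in the spirit of Lemma \ref{Moorepathseq}) with a push-to-infinity homotopy in the configuration factor, using the hypothesis $N \geq 1$ to have a direction available. Once this is in place, assembling the three equivalences above proves the proposition.
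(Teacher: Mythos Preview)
Your overall architecture---build a semi-simplicial space $X_\bullet$ with augmentations to both $\Phi_0^N$ and $N_\bullet\C_N$---is exactly the paper's, but two steps in your execution do not go through.

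First, choosing the \emph{discrete} poset $\mathbb{R}^\delta$ makes the left augmentation $\pi$ work but breaks the right one. With $X_p\subset N_p\mathbb{R}^\delta\times\Phi_0^N$, the map $\mu_p:X_p\to N_p\C_N$ cannot be a levelwise weak equivalence: the coordinates $t_i$ are discrete in the source but vary continuously in the target, so $X_p$ has uncountably many path components (one for each tuple $\vec t$) while $N_p\C_N$ has only countably many (one for each $n$). The paper instead sets $X_p=\Phi_0^N\times N_p\mathbb{R}$ with the \emph{topological} poset $\mathbb{R}$; then the projection $\epsilon:\|X_\bullet\|\to\Phi_0^N$ is simply a product fibration with fiber $\mathbb{B}\mathbb{R}$, contractible by Lemma~\ref{posetthing}, and $\eta_p$ can be compared fiberwise over the contractible base $N_p\mathbb{R}$. (Incidentally, your ``regular time'' condition is vacuous here: for $k=0$ the configurations live in the compact cube $I^N$, so every $t\in\mathbb{R}$ is regular. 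The genuine transversality condition only enters for $k\geq 1$, in Proposition~\ref{higherdelooping}.)

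Second, your proposed contractibility argument for the extension space is wrong in a way that matters. An element of $\Phi_0^N$ is a path $\mathbb{R}\to\UConf(n,I^N)$ for a \emph{fixed} $n$, and $I^N$ is compact: there is no coordinate direction along which points can be slid to infinity, and no continuous path to the empty configuration. The paper's argument instead pushes along the \emph{time} direction: the endpoint map $p_0:\Phi_0^N((-\infty,t_0])\to\UConf(n,I^N)$ has homotopy inverse $q_0$ sending $x$ to the constant path at $x$, with the homotopy $H(\phi,s)$ given by replacing $\phi$ on $[t_0-\tfrac{s}{1-s},\,t_0]$ by the constant $\phi(t_0)$ and translating the rest backwards. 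This exploits precisely the inverse-limit topology on $\Phi_0^N$ (only behaviour on compact time-intervals matters), not any unboundedness in the configuration factor; the hypothesis $N\geq 1$ plays no role at this step.
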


\begin{proof}
Let us construct a semi-simplicial resolution $X_\bullet$ of $\Phi_0^N.$
The space of $p$-simplices $X_p$ is given by $\Phi_0^N\times N_p\mathbb{R}$, where $\mathbb{R}$ is seen as a topological poset.
The face maps are given by, for $0\leq k \leq p$:
\begin{align*}
    d_k: X_p&\rightarrow X_{p-1}\\
    (\phi,t_0,\dots, t_p) &\mapsto (\phi,t_0,\dots, \hat{t_k},\dots, t_p).
\end{align*}
There is a semisimplicial map $\eta: X_\bullet\rightarrow N_\bullet\C_N$, which is defined on $p$-simplices as the map which sends $(\phi,t_0,\dots, t_p)$ to $\left((\phi_{\vert_{[t_0,t_1]}},t_0,t_1),\dots,(\phi_{\vert_{[t_{p-1},t_p]}},t_{p-1},t_p)\right)$. 
Here, $\phi_{\vert_{[t_{i-1},t_i]}}$ is an element of $\Phi_0^N([t_{i-1};t_i])$.
(Restricting $\phi$ to compact subintervals of $\mathbb{R}$ is continous with respect to the limit topology).
Informally, the map $\eta$ forgets what happens before $t_0$ and after $t_p$ and restricts $\phi$ to $p$ subintervals in between.

The projection $X_p\rightarrow N_p\mathbb{R}$ sending $(\phi, t_0,\dots, t_p)$ to $(t_0,\dots, t_p)$ is a fibration. 
Since the target of the fibration is contractible, the space $X_p$ is equivalent to the fiber over a fixed point $(t_0,\dots, t_p)$.
Given a $p$-tuple $(t_0,\dots,t_p)$, an element $\phi$ in the fiber is entirely determined by its restriction to all the intervals $(-\infty,t_0],\dots,[t_p,\infty)$ provided their endpoints coincide pairwise.
This means that the fiber $X'_p$ of the projection can be expressed as:
\begin{align}
X'_p =\Phi_0^N((-\infty,t_0])\times_{\UConf(n,I^N)}\dots\times_{\UConf(n,I^N)}&\Phi_0^N([t_k,t_{k+1}]) \label{fiberprime}\\
\times_{\UConf(n,I^N)}\dots\times_{\UConf(n,I^N)}\Phi_0^N([t_p,\infty)) \nonumber.
\end{align}

The map $N_p\C_N\rightarrow N_p\mathbb{R}$ is also a fibration, so let us consider the following diagram:
\begin{center}
    \begin{tikzcd}
        X'_p \arrow{r}\arrow{d}&Y'_p\arrow{d}\\
        X_p\arrow{r}{\eta_p}\arrow{d} &N_p\C_N\arrow{d}\\
        N_p\mathbb{R}\arrow{r}{=}&N_p\mathbb{R}.
    \end{tikzcd}
\end{center}
The restriction of $\eta_p$ to $X'_p$ is a map into the space
\[Y'_p:= \Phi_0^N([t_0,t_1])\times_{\UConf(n,I^N)}\dots\times_{\UConf(n,I^N)}\Phi_0^N([t_{p-1},t_{p}])\]
which forgets the two outer terms of the product in expression \ref{fiberprime}. 
We therefore need to prove that this operation does not change the homotopy type of the product.

The target maps from each $\Phi\left((-\infty,t_0]\times I^N, ([-R,R]\cap(-\infty,t_0])\times I^N\right)$ -- resp. source maps from $\Phi\left([t_p,\infty)\times I^N, ([-R,R]\cap[t_p,\infty))\times I^N\right)$ -- induce maps from the limit:
\begin{align*}
p_0: \Phi_0^N((-\infty,t_0])&\rightarrow \UConf(n,I^N)\\
p_p: \Phi_0^N([t_p,\infty))&\rightarrow \UConf(n,I^N)
\end{align*}
which are also fibrations, so the two outer pullbacks in (\ref{fiberprime}) are homotopy pullbacks.

Let us show that the two outer terms in this (homotopy) pullback are equivalent to $\UConf(n,I^N)$
and so that removing them does not change the homotopy type.
Let us consider the map $q_0:\UConf(n,I^N)\rightarrow \Phi_0^N((-\infty,t_0])$ induced by the maps:
$q_R:\UConf(n,I^N)\rightarrow \Psi((-\infty,t_0]\times I^N, [-R,R]\times I^N)$ which send a configuration $x$ to the constant path $\phi:t\mapsto (x,t)$.
Clearly, $p_0\circ q_0 = \id$. The following is a homotopy between $q_0\circ p_0$ and $\id$:
\begin{align*}
H: \Phi_0^N((-\infty,t_0])\times I &\rightarrow \Phi_0^N((-\infty,t_0])\\
(\phi,t_0,s) &\mapsto (\gamma_s,t_0),
\end{align*}
where $\gamma_1$ is the constant path at $\phi(t_0)$ and for $s<1$:
$$\gamma_s:t \mapsto \begin{cases}
&\phi(t-t_0-\frac{s}{1-s}),\text{ if }t\in(-\infty,t_0-\frac{s}{1-s}];\\ 
&\phi(t_0),\text{ if }t\in[t_0-\frac{s}{1-s},t_0].
\end{cases}$$ 
Therefore, the map $p_0$ is an equivalence.
Similarly, the map $p_p$ is an equivalence.

This proves that the lift of $\eta_p$ to fibers is an equivalence.
The map $\eta_p$ is a homotopy equivalence for all $p$, and so the map $\eta$ induces an equivalence on weak realizations:
\[\vert\vert X_\bullet\vert\vert \simeq \vert\vert N_\bullet\C_N\vert\vert,\]
but the thick realization of the nerve is equivalent to the ordinary geometric realization by lemma \ref{goodnerve}, therefore:
\[\vert\vert X_\bullet\vert\vert \simeq B\C_N.\]

Because $\vert\vert X_\bullet\vert\vert = \Phi_0^N\times \mathbb{B}\mathbb{R}$, there is a projection map 
\begin{align}
\epsilon: \vert\vert X_\bullet\vert\vert\rightarrow \Phi_0^N 
\end{align}
which forgets the $(t_k)$.
This map is a fibration.
% Indeed, given an outer square 
% \begin{center}
%     \begin{tikzcd}
%     D^k\times\{0\} \arrow[rr,"A"] \arrow[dd] \arrow[dr] && \vert\vert X_\bullet\vert\vert\arrow[dd,"\epsilon"]\\
%     &D^k\times[0,\delta] \arrow[dl]\arrow[ur,dashrightarrow,"H"]\\
%     D^k\times[0,1]\arrow[rr,"B"] &&\Phi_0^N,
%     \end{tikzcd}
% \end{center}
% where $A(d) = (\phi(d),t_0(d),\dots, t_p(d),x(d))$ and $B(d,s) = (\psi(d,s))$,
% we can find a lift $H$ by setting $H(s) = (\psi(d,s),t_0(d),\dots, t_p)$ for some small $\delta$. 
% The idea is that if $\phi$ moves just a little bit, it still won't hit the walls at the $t_k$.

The fiber over a point $\phi\in\Phi_0^N$ is the classifying space of the topological poset $\mathbb{R}$, which we show to be contractible in lemma \ref{posetthing}.
The map $\epsilon$ has contractible fibers so it is a homotopy equivalence, yielding the following zigzag:
\[\Phi_0^N\xleftarrow{\simeq}\vert\vert X_\bullet\vert\vert \xrightarrow{\simeq} B\C_N.\]
Additionally, these maps are compatible with the $E_N$-structures on $\Phi_0^N$ and $\C_N$.
In particular, when $N=\infty$, we get:
\[\Phi_0^\infty\xleftarrow{\simeq}\vert\vert X_\bullet\vert\vert \xrightarrow{\simeq} B\C_\infty. \qedhere\]

\end{proof}

\subsection{Zooming in, in higher dimensions}
Now, we will ``zoom in'' dimension by dimension in order to map into a space of local images of paths of configurations.

Informally, the idea of scanning is to construct a map $\mathbb{R}^N\times \Phi_0^N\rightarrow \Phi_N^N$, which sends a pair $(x,\phi)$ to the part of $\phi$ contained in a small neighborhood of the point $x$ as if there were a magnifying glass set at the point $x$ (see figure \ref{scanning}).
\begin{figure}[h!]
    \tiny
    \def\svgwidth{\hsize}
    \begingroup%
    \makeatletter%
    \providecommand\color[2][]{%
      \errmessage{(Inkscape) Color is used for the text in Inkscape, but the package 'color.sty' is not loaded}%
      \renewcommand\color[2][]{}%
    }%
    \providecommand\transparent[1]{%
      \errmessage{(Inkscape) Transparency is used (non-zero) for the text in Inkscape, but the package 'transparent.sty' is not loaded}%
      \renewcommand\transparent[1]{}%
    }%
    \providecommand\rotatebox[2]{#2}%
    \newcommand*\fsize{\dimexpr\f@size pt\relax}%
    \newcommand*\lineheight[1]{\fontsize{\fsize}{#1\fsize}\selectfont}%
    \ifx\svgwidth\undefined%
      \setlength{\unitlength}{287.23888968bp}%
      \ifx\svgscale\undefined%
        \relax%
      \else%
        \setlength{\unitlength}{\unitlength * \real{\svgscale}}%
      \fi%
    \else%
      \setlength{\unitlength}{\svgwidth}%
    \fi%
    \global\let\svgwidth\undefined%
    \global\let\svgscale\undefined%
    \makeatother%
    \begin{picture}(1,0.2292096)%
      \lineheight{1}%
      \setlength\tabcolsep{0pt}%
      \put(0,0){\includegraphics[width=\unitlength,page=1]{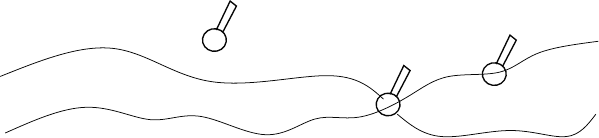}}%
    \end{picture}%
  \endgroup%
    \caption{Idea of the scanning map}
    \label{scanning}
    \end{figure}
When $x$ goes to infinity, $(\phi,x)$ is sent to the basepoint, so the map above factors through $S^N$. 
This yields a map $s:\Phi_0^N\rightarrow \Omega^N\Phi_N^N$, called the scanning map.
In practice, showing that this map is an equivalence is more technical and requires zooming in on each dimension separately. 

Let us compute the classifying space of $\Phi_0^N$:
\begin{prop}\label{deloopingleveltwo}
    There is an equivalence $B \Phi_0^N \simeq \Phi_1^N$.
\end{prop}

\begin{proof}
Let us strictify $\Phi_0^N$ into a monoid $\tilde{\Phi}_0^N$ where elements of $\tilde{\Phi}_0^N$ are tuples $(\phi,a,b)$, where $a,b\in\mathbb{R}$ and $\phi$ is the image by an affine map 
of an element in $\Phi_0^N$ so that it is a map from $\mathbb{R}$ to $\mathbb{R}\times [a,b]\times I^{N-1}$.
The monoid operation is obtained by stacking.
There is an equivalence $\tilde{\Phi}_0^N\simeq \Phi_0^N$.

Let us show that $B \tilde{\Phi}_0^N \simeq \Phi_1^N$.

We use the same method as in the proof of proposition \ref{deloopinglevelone}, so we will go into less detail in this proof.
Let us build a semi-simplicial space $Z_\bullet$ which will fit into a zigzag of the form
\[B \tilde{\Phi}_0^N \leftarrow \vert\vert Z_\bullet\vert\vert \rightarrow \Phi_1^N.\]
Let $Z_p$ be the subset of $\Phi_{1}^N\times N_p\mathbb{R}$ containing all $(\phi,t_0,\dots, t_p)$ such that $\phi$ does not intersect the hyperplanes $\mathbb{R}^{1}\times \{t_k\}\times I^{N-1}$, for every $0\leq k \leq p$.
The $i$-th face map forgets $t_i$.

There is a semisimplicial map $\eta: Z_\bullet\rightarrow B_\bullet \tilde{\Phi}_0^N$:
on $p$-simplices, this map sends $(\phi,t_0,\dots, t_p)$ to the chain $\left((\phi_1,t_0,t_1),\dots, (\phi_p,t_{p-1},t_p)\right)$,
where $\phi_1$ is the intersection of $\phi$ with $\mathbb{R}\times [t_0,t_p]\times I^{N-1}$.
Informally, the map $\eta_p$ forgets what happens before $t_0$ and after $t_p$.

There is a clear inclusion $i_p: B_p \tilde{\Phi}_0^N \rightarrow Z_p$.
The composition $\eta_p\circ i_p$ is the identity, and the composition $i_p\circ \eta_p$ is homotopic to the identity via the homotopy 
which slides further and further away the components of $\phi$ before $t_0$ and after $t_p$ to the point at infinity of $Z_p$.
The map $\eta_p$ is a homotopy equivalence for all $p$, and so the map $\eta$ induces an equivalence on weak realizations:
\[\vert\vert Z_\bullet\vert\vert \simeq  \vert\vert B_\bullet \tilde{\Phi}_0^N\vert\vert\]

The thick realization of the monoid is equivalent to the ordinary geometric realization, so:
\[\vert\vert Z_\bullet\vert\vert \simeq B\tilde{\Phi}_0^N.\]

There is a projection map $\epsilon: \vert\vert Z_\bullet\vert\vert \rightarrow \Phi_1^N$.
which at each level sends $(\phi,t_0,\dots, t_p)$ to $\phi$. 
Let us show that this map is a microfibration by considering the square:
\begin{center}
    \begin{tikzcd}
    D^k\times\{0\} \arrow[rr,"A"] \arrow[dd] \arrow[dr] && \vert\vert Z_\bullet\vert\vert\arrow[dd,"\epsilon"]\\
    &D^k\times[0,\delta] \arrow[dl]\arrow[ur,dashrightarrow,"H"]\\
    D^k\times[0,1]\arrow[rr,"B"] &&\Phi_1^N,
    \end{tikzcd}
\end{center}
where $A(d) = (\phi(d),t_0(d),\dots, t_p(d),x(d))$ and $B(d,s) = (\psi(d,s))$,
we can find a lift $H$ by setting $H(s) = (\psi(d,s),t_0(d),\dots, t_p(d), x(d))$ for some small $\delta$ and $s\in[0,\delta]$. 
The idea is that if $\phi$ moves just a little bit, it still won't hit the walls at the $t_k$.

The fiber over a point $\phi\in\Phi_0^N$ is the realization of the nerve of the poset \[P:= \{t\in\mathbb{R}\vert \phi\cap\left(\mathbb{R}^{1}\times \{t_k\}\times I^{N-1}\right) = \emptyset\}\].
Lemma \ref{posetthing} shows that the nerve of this poset is contractible.
The microfibration $\epsilon$ has contractible fibers so it is a homotopy equivalence, yielding the following zigzag:
\[\Phi_1^N \xleftarrow{\simeq} \vert\vert Z_\bullet\vert\vert \xrightarrow{\simeq} B\tilde{\Phi}_0^N.\qedhere\]
\end{proof}

\begin{prop}\label{higherdelooping}
For all $N$ and $1\leq k \leq N-1$, there is a weak equivalence:
\begin{align}
\Phi_k^N \simeq \Omega \Phi_{k+1}^N,
\end{align}
\end{prop}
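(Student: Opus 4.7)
The plan is to adapt the strategy of Proposition \ref{deloopinglevelone}: build a semi-simplicial resolution $Z_\bullet$ of $\Phi_{k+1}^N$ whose space of $1$-simplices is equivalent to $\Phi_k^N$, and then invoke Lemma \ref{semiSegal}. Concretely, I define $Z_p$ to be the space of pairs $(\phi, t_0<\dots<t_p)$ where $\phi\in\Phi_{k+1}^N$ and the $t_i\in\mathbb{R}$ are values avoided by $\phi$ in the $(k+1)$st spatial direction: no configuration point of $z_\phi(t)$ has $(k+1)$st spatial coordinate equal to any $t_i$, for any time $t$. Face maps omit a $t_i$.

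First, I would show that $\vert\vert Z_\bullet\vert\vert \simeq \Phi_{k+1}^N$. The projection $Z_p\to\Phi_{k+1}^N$ forgetting the tuple assembles, via the second bullet of Proposition \ref{levelwisereal} applied to $Z_\bullet$ viewed as an open sub-semi-simplicial space of $N_\bullet\mathbb{R} \times \Phi_{k+1}^N$ (with $\mathbb{R}$ regarded as a discrete set), into a Serre microfibration $\vert\vert Z_\bullet\vert\vert \to\Phi_{k+1}^N$. The fiber over $\phi$ is the thick realization of the nerve of the topological sub-poset $\mathbb{R}\setminus S_\phi$, where $S_\phi$ is the (countable) set of $(k+1)$st spatial coordinates occurring in $\phi$; a minor variant of Lemma \ref{posetthing} shows this is contractible. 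Since a Serre microfibration with contractible fibers is a weak equivalence, one obtains $\vert\vert Z_\bullet\vert\vert \simeq \Phi_{k+1}^N$.

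Next, I would identify $Z_p$ up to homotopy with $(\Phi_k^N)^p$, compatibly with face maps. The projection $Z_p\to N_p\mathbb{R}$ is a fibration with contractible base, and its fiber at $(t_0,\dots,t_p)$ consists of $\phi\in\Phi_{k+1}^N$ avoiding the hyperplanes $\{(k+1)\text{st coord}=t_i\}$. Such $\phi$ split uniquely as a disjoint union over slabs: two outer slabs $\mathbb{R}^k\times(-\infty,t_0)\times I^{N-k-1}$ and $\mathbb{R}^k\times(t_p,\infty)\times I^{N-k-1}$, and interior slabs $\mathbb{R}^k\times(t_i,t_{i+1})\times I^{N-k-1}$ for $0\leq i\leq p-1$. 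Each interior piece, after an order-preserving rescaling $(t_i,t_{i+1})\to(0,1)\subset I$, defines an element of $\Phi_k^N$; each outer piece is contractible to the basepoint by a slide-to-infinity deformation in the $(k+1)$st direction, analogous to the arguments for the maps $p_0, p_p$ in the proof of Proposition \ref{deloopinglevelone}. This gives $Z_p\simeq(\Phi_k^N)^p$ functorially with respect to face maps, so that $Z_\bullet$ is Segal with $Z_1\simeq\Phi_k^N$ and $Z_0\simeq *$.

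Finally, since $\Phi_k^N$ is path-connected for $k\geq 1$ by Proposition \ref{pathconnected}, Lemma \ref{semiSegal} yields $\Phi_k^N\simeq \Omega \vert\vert Z_\bullet\vert\vert \simeq \Omega\Phi_{k+1}^N$; for $k=0$, a minor adaptation accounting for multiple path components is required, relying on the induced monoid structure on $\pi_0$ being group-like. The main obstacle will be the continuous, functorial identification $Z_p\simeq(\Phi_k^N)^p$: the rescaling homeomorphisms $(t_i,t_{i+1})\to(0,1)$ vary with $(t_0,\dots,t_p)$ and must interact correctly with the limit topologies defining $\Phi_k^N$ and $\Phi_{k+1}^N$, and one must ensure the slide-to-infinity deformation of the outer slabs is continuous in this parameter as well.
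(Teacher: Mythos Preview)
Your approach is the paper's: same semi-simplicial space $Z_\bullet$, same microfibration argument for $\vert\vert Z_\bullet\vert\vert\to\Phi_{k+1}^N$ with fibers given by classifying spaces of subposets of $\mathbb{R}$, same appeal to Lemma~\ref{semiSegal}. The one shortcut you miss is that the Segal maps
\[
Z_p\longrightarrow Z_1\times_{Z_0}\cdots\times_{Z_0}Z_1
\]
are \emph{isomorphisms}, not merely weak equivalences: an element of the right-hand side is a tuple $((\phi_1,t_0,t_1),\dots,(\phi_p,t_{p-1},t_p))$ with all the $\phi_i$ forced to coincide by the matching condition over $Z_0$, so it is exactly the data $(\phi,t_0,\dots,t_p)$. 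Hence the only homotopical input needed is a single deformation retraction $Z_1\simeq\Phi_k^N$ (restrict to the slab between $t_0$ and $t_1$, push what lies outside to infinity, rescale), which is what the paper records. Your slab-by-slab computation of $Z_p\simeq(\Phi_k^N)^p$, with separate contraction of the two outer slabs, is correct but does more work than necessary.

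Two small corrections. The set $S_\phi$ is not countable in general---it is a closed subset of $\mathbb{R}$---but its complement is still an open subposet of $\mathbb{R}$ and Lemma~\ref{posetthing} applies. And your proposed patch for $k=0$ does not work as stated: the monoid $\pi_0(\Phi_0^N)\cong\mathbb{N}$ (number of strands, under stacking) is not grouplike. The paper's proof does not address the $k=0$ hypothesis of Lemma~\ref{semiSegal} either; the mismatch with $\pi_0(\Omega\Phi_1^N)$ is ultimately absorbed by the group-completion step in the final section.
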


%We will use the following lemma:
%\begin{lem}
%If $X_\bullet$ is a semi-Segal space with $X_1$ path-connected (or group-like), then $X_1\simeq\Omega \vert\vert X_\bullet\vert\vert.$
%\end{lem}
\begin{proof}
We are going to use Lemma \ref{semiSegal}.
The proof is very similar to that of Proposition \ref{deloopinglevelone}.
Let us construct a semi-simplicial Segal space $X_\bullet$ such that $X_1\simeq \Phi_{k}^N$ and $\vert\vert X_\bullet\vert\vert \simeq \Phi_{k+1}^N$.

Let $X_\bullet$ be the semi-simplicial space whose space of $p$-simplices is the subspace of $\Phi_{k+1}^N\times N_p\mathbb{R}$ of all $(\phi,t_0,\dots, t_p)$ such that $\phi$ does not intersect the hyperplanes $\mathbb{R}^{k+1}\times \{t_k\}\times I^{N-k-1}$, for every $0\leq k \leq p$.
The $i$-th face map forgets $t_i$.
The Segal maps
\begin{align*}
    X_n &\rightarrow X_1\times_{X_0}\dots\times_{X_0} X_1\\
    (\phi,t_0,\dots,t_n) &\mapsto ((\phi,t_0,t_1),\dots,(\phi,t_{n-1},t_n))
\end{align*}
are isomorphisms, so $X_\bullet$ is a semi-simplicial Segal space. \\
Let us show that $X_1$ deformation retracts onto $\Phi_{k}^N$. 
Define a map $l:X_1\rightarrow \Phi_{k}^N$ which restricts $(\phi,t_0,t_1)$ to $\phi\cap \left(\mathbb{R}^{k}\times (t_0,t_1) \times I^{N-k}\right)$ and then rescales $(t_0,t_1)$ to $(0,1)$ to get an element of $\Phi_{k}^N$.\\
There is another map $m:\Phi_{k}^N\rightarrow X_1$ which sends $\phi$ to $(\phi,0,1)$.
Clearly, $l\circ m = \id$.
Moreover, there is a homotopy between $m\circ l$ and the identity of $X_1$, which is obtained by pushing off whatever is outside of $[t_0,t_1]$ to infinity (which is possible by using the topology defined in Definition \ref{weirdtop}) and simultaneously rescaling $[t_0,t_1]$ to $[0,1]$.
Consequently, $X_1\simeq \Phi_{k}^N$.

There is a map
\begin{align}\label{fib}
    \vert\vert X_\bullet\vert\vert\rightarrow \Phi_{k+1}^N
\end{align}
which forgets the $(t_k)$. It is a microfibration following the same argument as in the proof of proposition \ref{deloopingleveltwo}.

The fiber of the map $\epsilon$ over a point $\phi$ is the classifying space of the topological poset $P:= \{t\in\mathbb{R}\vert \phi\cap\left(\mathbb{R}^{k+1}\times \{t_k\}\times I^{N-k-1}\right) = \emptyset\}$.
We can show that this poset is contractible using lemma \ref{posetthing}.

The map \ref{fib} is a microfibration with contractible fibers, so it is an equivalence.
By lemma \ref{semiSegal}, we get:
\[\Phi_k^N \simeq \Omega \Phi_{k+1}^N.\qedhere\] 
\end{proof}

\begin{cor}\label{loopdeloop}
    There is a weak equivalence \[\Omega B \Phi_0^N \simeq \Omega^N \Phi_N^N.\]
\end{cor} 

\section{Describing $\Phi_N^N$}

We will now give a simpler description of the space $\Phi_N^N$, which contains all the local images of elements in $\Phi_0^N$.
\begin{defi}
    Let $U_1$ be the subset of $\Phi_N^N$ which contains those $\phi$ such that there exists $r>0$ that satisfies that $\phi\cap B(0,r)$ has exactly one path. 
    This in particular implies that $\phi(0)\in B(0,r)$.\\
    Let $U_0$ be the subset of $\Phi_N^N$ which contains those $\phi$ such that there exists $r>0$ that satisfies that $\phi\cap B(0,r)$ is empty.
    We denote by $U_{01}$ the intersection $U_0\cap U_1$.
\end{defi}

\begin{lem}
    The space $\Phi_N^N$ is weakly equivalent to the homotopy pushout:
    %\begin{center}
    %\begin{tikzcd}
    %&U_{13}\arrow{rr}\arrow[dd, dashed] &&U_1\arrow{dd}\\
    %U_{013}\arrow{rr}\arrow{dd}\arrow{ru} &&U_{01} \arrow{dd}\arrow{ru}\\
    %&U_3\arrow[rr, dashed] &&\mathbbm{\Phi}(\mathbb{R}^{N+1})\\
    %U_{03}\arrow[ru, dashed]\arrow{rr} &&U_0\arrow{ru}
    %\end{tikzcd}
    %\end{center}
    \begin{center}
    \begin{tikzcd}
    U_{01}\arrow{r}\arrow{d} &U_1\arrow{d}\\
    U_0\arrow{r} &\Phi_N^N.
    \end{tikzcd}
    \end{center}
\end{lem}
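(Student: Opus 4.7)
The plan is to identify $\{U_0, U_1\}$ as an open cover of $\Phi_N^N$ and then invoke the classical principle that a numerable open two-cover presents a space as the homotopy pushout $U_0 \leftarrow U_{01} \to U_1$. The bulk of the work breaks into (i) openness, (ii) the covering property, and (iii) the upgrade from strict pushout to homotopy pushout.

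First, I would verify openness of $U_0$ and $U_1$ in the inverse-limit topology on $\Phi_N^N$. The conditions defining these subsets---empty intersection with $B(0,r)$, respectively a single strand through $B(0,r)$---are stable under perturbations that agree with the original representative on a sufficiently large compact region: if $\phi \in U_0$ is witnessed by radius $r>0$, then any $\phi'$ agreeing with $\phi$ on the truncation $[-r,r]\times B_r^N(0)$ still satisfies $\phi' \cap B(0, r/2) = \emptyset$, and similarly for $U_1$ after possibly shrinking $r$.

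Next, I would show $\Phi_N^N = U_0 \cup U_1$. Pick $\phi \in \Phi_N^N$ represented by strands $s_i(t) = (t, z_{\phi,i}(t)) \in \mathbb{R}^{N+1}$ for $i=1,\dots,n$. The distance functions $f_i(t) := \sqrt{t^2 + |z_{\phi,i}(t)|^2}$ are continuous and proper in $t$, so each attains a minimum $m_i \geq 0$. The key observation is that $m_i = 0$ forces $t=0$ and $z_{\phi,i}(0)=0$; since $z_\phi(0)$ is a configuration of \emph{distinct} points, at most one index $i$ can have $m_i = 0$. If every $m_i > 0$, then $r = \tfrac12 \min_i m_i$ witnesses $\phi \in U_0$; if, say, $m_1 = 0$, then $r = \tfrac12 \min_{i\geq 2} m_i$ (or any $r>0$ when $n=1$) witnesses $\phi \in U_1$. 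Either way $\phi \in U_0 \cup U_1$.

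Finally, to upgrade the strict pushout $\Phi_N^N = U_0 \cup_{U_{01}} U_1$ to a homotopy pushout, I would exhibit the inclusions $U_{01} \hookrightarrow U_i$ as Hurewicz cofibrations by producing an NDR structure on each pair $(U_i, U_{01})$. The main obstacle will be this last step: the inverse-limit construction does not make paracompactness of $\Phi_N^N$ transparent, so rather than appealing to abstract partitions of unity I would build explicit cut-off functions out of the distance-to-origin functionals $f_i$ used above, noting that the witnessing radius $r$ can be chosen as a continuous function of the representative via $r(\phi) = \tfrac12 \min_i m_i$ on $U_0$ and an analogous minimum over strands not passing through the origin on $U_1$. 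These functions produce the NDR data required, yielding the claimed homotopy pushout presentation of $\Phi_N^N$.
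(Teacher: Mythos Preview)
Your proposal is correct and follows the same route as the paper: establish that $\{U_0,U_1\}$ is an open cover of $\Phi_N^N$ and deduce the homotopy pushout. The paper's argument is briefer in two respects. It does not spell out the covering property at all (your step (ii) actually supplies this missing detail), and for step (iii) it simply invokes the general fact that an open cover $X=U\cup V$ always presents $X$ as the homotopy pushout of $U\leftarrow U\cap V\to V$; for weak equivalences this requires only excision, not numerability or cofibrancy of the inclusions. Your plan to build explicit NDR data from the distance-to-origin functionals is therefore unnecessary. It would work, but as you anticipate it is the most delicate part --- in particular because the number of strands $n$ is not an invariant of a point of $\Phi_N^N$ in the limit topology, so the individual $m_i$ must first be repackaged as the single continuous function $\phi\mapsto d(0,\im\phi)$ before they are well defined.
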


\begin{proof}
The space $\Phi_N^N$ is the union of $U_0$ and $U_1$, and a pushout of open sets over their intersection is a homotopy pushout,
so let us prove that $U_0$ and $U_1$ are open.
Let us take $\phi\in U_0$, and $\psi$ such that $\vert\vert \phi -\psi\vert\vert < \epsilon$. (Recall that we are dealing with functions that are constant outside of a compact space so the norm is well-defined.)
There exists $r$ such that $\phi\cap B(0,r)$ is empty.
If we take $r'=r-\epsilon$, $\psi\cap B(0,r')$ is empty, so $\psi\in U_0$.
Therefore, $U_0$ is open.
In the same way, we prove that $U_1$ is open.
\end{proof}

\begin{defi}
    Let $U'_1$ be the space of pairs $(\phi, r)$ where $\phi \in U_1$ and $r$ is such that $\phi\cap B(0,r)$ has exactly one unique path closest to the origin.\\
    Let $U'_0$ be the space of pairs $(\phi, r)$ where $\phi\cap B(0,r)$ is empty.
\end{defi}
    
\begin{lem}
The projection maps $U'_1\rightarrow U_1$, and $U'_0\rightarrow U_0$ are weak equivalences.
\end{lem}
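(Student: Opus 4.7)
The plan is to apply the Weiss lemma recalled earlier in the excerpt, which asserts that a Serre microfibration with weakly contractible fibers is a weak homotopy equivalence. Accordingly, for each of the two projections $\pi_i : U'_i \to U_i$ (for $i=0,1$), which simply forget the radius $r$, I would verify that $\pi_i$ is a Serre microfibration and that each of its fibers is contractible.

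For the microfibration property, the key observation is that the defining conditions for $U'_0$ and $U'_1$ are open in the pair $(\phi, r) \in \Phi_N^N \times \mathbb{R}_{>0}$. Indeed, $\phi \cap B(0,r) = \emptyset$ is preserved under small perturbations of both $\phi$ and $r$ in the limit topology on $\Phi_N^N$; analogously, the condition defining $U'_1$ (having a unique path closest to the origin inside $B(0,r)$) is stable under small perturbations. Given a lifting square as in the definition of Serre microfibration, with $A(d) = (\phi_0(d), r(d))$ on $D^k \times \{0\}$ and $B(d, s) = \phi_s(d)$ on $D^k \times [0, 1]$, the natural candidate $H(d, s) = (\phi_s(d), r(d))$ gives a valid lift for $s \in [0, \epsilon]$ with $\epsilon$ small enough, using the compactness of $D^k$ to make the openness condition uniform.

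For contractibility of the fibers, consider first $\pi_0$. The fiber over $\phi \in U_0$ is the set $\{r > 0 : \phi \cap B(0, r) = \emptyset\}$, which equals the open interval $(0, R_\phi)$, where $R_\phi$ denotes the distance from the origin to the image of $\phi$; this is a nonempty open interval, hence contractible. For $\pi_1$, the fiber over $\phi \in U_1$ is the set of $r > 0$ for which $\phi \cap B(0, r)$ has a unique closest path to the origin. This set is nonempty by the very definition of $U_1$, and one argues it forms an open interval bounded above by the first radius at which a second path becomes equidistant to the origin (or at which the closest path ceases to be uniquely identifiable), and below either by $0$ or by the radius at which the identified closest path first meets $\partial B(0, r)$ from inside. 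Along such an interval of valid radii, the path singled out as closest remains the same, so the fiber is connected and in fact convex.

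The main obstacle is the fiber analysis for $\pi_1$: one must track carefully how the set of paths of $\phi$ meeting $B(0, r)$ evolves with $r$ and verify that uniqueness of the closest one defines a convex subset of $\mathbb{R}_{>0}$, rather than a union of intervals separated by ties in distance to the origin. Once the two fibers are shown to be contractible and both maps are confirmed to be microfibrations, the Weiss lemma delivers both weak equivalences simultaneously.
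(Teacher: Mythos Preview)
Your approach is exactly the paper's: show each projection is a Serre microfibration with contractible (interval) fibers and invoke Weiss's lemma. Two places where the paper's execution is tidier than yours: for the lift, the paper does not keep $r$ fixed but shrinks it, setting $r'(d,s)=r(d)-\lVert\phi(d)-\phi'(d,s)\rVert_\infty$, which automatically keeps the other paths outside $B(0,r')$ and so avoids any boundary-touching subtlety you would otherwise have to address; and for the fiber of $\pi_1$, the paper just notes that the number of paths meeting $B(0,r)$ is nondecreasing in $r$, so if it equals $1$ at two radii $r<r'$ it equals $1$ for every $r''$ in between --- this one-line monotonicity argument replaces your endpoint description and makes the worry about ``ties in distance'' disappear.
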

    
\begin{proof}
    Let us prove that the map $U'_1\rightarrow U_1$ is a microfibration. Consider given the outer square:
    \begin{center}
        \begin{tikzcd}[row sep = 0.4cm, column sep = 0.4cm]
        D^k\times\{0\}\arrow{dd}\arrow{dr} \arrow{rr}{A} &&U'_1\arrow{dd}{p}\\
        &D^k\times[0,\delta] \arrow[dashed]{ru}{H}\arrow{dl}\\
        D^k\times[0,1]\arrow{rr}{B} &&U_1.
        \end{tikzcd}
    \end{center}
    If $A(d) = (\phi(d),r(d)) \in U'_1$ and $B(d,s) = \phi'(d,s)$, 
    let us set $r'(d,s)= r(d) - \vert\vert \phi(d) - \phi'(d,s)\vert\vert_\infty$ on $[0;\delta]$, where $\delta$ is small enough that for $s\leq \delta$, $r'(d,s)> \vert\vert\phi'(d,s)(0)\vert\vert$.
    Then $H(d,s) = (\phi'(d,s),r'(d,s))$ is a lift in the diagram.
    The fiber over $\phi$ contains all $r$ such that $(\phi,r)\in U'_1$. 
    If $r$ and $r'$ are in the fiber, choose $r<r''<r'$.
    By definition, $\phi\cap B(0,r)$ has one path, and the same holds for $\phi\cap B(0,r')$, so $\phi\cap B(0,r'')$ as well.\\
    Therefore, the fiber of $U'_1\rightarrow U_1$ is an interval, so it is contractible.
    The map $U'_1\rightarrow U_1$ is a weak equivalence.
In the same way, the map $U'_0\rightarrow U_0$ is a weak equivalence.
\end{proof}
    
\begin{defi}
    Let $U''_{01}$ be the space $\{(\phi, r, r') \text{ where }(\phi, r)\in U'_0\text{ and }(\phi, r')\in U'_1\}$.
    \end{defi}
    
    \begin{lem}
        The map $U''_{01}\rightarrow U_{01}$ is a weak equivalence.
    \end{lem}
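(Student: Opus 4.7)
The plan is to replicate the strategy of the preceding lemma almost verbatim: exhibit the projection $U''_{01}\to U_{01}$ as a Serre microfibration with contractible fibres and then appeal to Weiss's lemma to conclude that it is a weak equivalence.

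For the microfibration property, I would take a lifting square
\begin{center}
\begin{tikzcd}[row sep = 0.4cm, column sep = 0.4cm]
D^k\times\{0\}\arrow{dd}\arrow{dr} \arrow{rr}{A} &&U''_{01}\arrow{dd}{p}\\
&D^k\times[0,\delta] \arrow[dashed]{ru}{H}\arrow{dl}\\
D^k\times[0,1]\arrow{rr}{B} &&U_{01},
\end{tikzcd}
\end{center}
with $A(d)=(\phi(d),r(d),r'(d))$ and $B(d,s)=\phi'(d,s)$, and set
\[
\tilde r(d,s) := r(d) - \lVert \phi(d) - \phi'(d,s)\rVert_\infty, \qquad \tilde r'(d,s) := r'(d) - \lVert \phi(d) - \phi'(d,s)\rVert_\infty.
\]
By continuity and compactness of $D^k$, for $\delta$ sufficiently small one has simultaneously $\tilde r(d,s)>0$, so that $\phi'(d,s)\cap B(0,\tilde r(d,s))$ remains empty, and $\tilde r'(d,s)>\lVert \phi'(d,s)(0)\rVert$, so that $\phi'(d,s)\cap B(0,\tilde r'(d,s))$ still contains exactly one path closest to the origin (this is exactly the argument used for $U'_0\to U_0$ and $U'_1\to U_1$). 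Then $H(d,s):=(\phi'(d,s),\tilde r(d,s),\tilde r'(d,s))$ is the desired lift.

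For the fibres, the key observation is that the two defining conditions of $U''_{01}$ decouple entirely in the variables $r$ and $r'$. The fibre over $\phi\in U_{01}$ is
\[
p^{-1}(\phi) = \{r\in\mathbb{R}_{>0}\mid \phi\cap B(0,r)=\emptyset\}\;\times\;\{r'\in\mathbb{R}_{>0}\mid \phi\cap B(0,r') \text{ has one path closest to } 0\}.
\]
Each factor is an interval, by the same nesting argument used in the proof that $U'_0\to U_0$ and $U'_1\to U_1$ have contractible fibres: if two radii satisfy the emptiness (resp.\ single-path) condition, then so does every radius between them. Hence $p^{-1}(\phi)$ is a product of two intervals, therefore contractible.

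The only point where I expect to be careful is the choice of $\delta$ in the microfibration argument: I need to make sure both inequalities $\tilde r(d,s)>0$ and $\tilde r'(d,s)>\lVert \phi'(d,s)(0)\rVert$ hold uniformly over $d\in D^k$, which follows from compactness of $D^k$ together with the continuity of $\phi'$ in the compactly-supported sup norm on $\Phi_N^N$. With the microfibration and contractible-fibre statements established, Weiss's lemma (applied earlier in the paper) upgrades $p$ to a Serre fibration, hence a weak homotopy equivalence, completing the proof.
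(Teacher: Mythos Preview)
Your proposal is correct and follows exactly the paper's approach: the paper's proof simply says ``The projection $U''_{01}\rightarrow U_{01}$ is a microfibration as above. The fiber is the space of pairs $(r,r')$ such that $(\phi, r)\in U'_0$ and $(\phi, r')\in U'_1$, so it is a product of nonempty intervals of $\mathbb{R}$ and therefore contractible.'' You have faithfully unpacked the phrase ``as above'' by writing out the explicit lift and the interval argument for each factor, so there is nothing to add.
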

    
    \begin{proof}
    The projection $U''_{01}\rightarrow U_{01}$ is a microfibration as above. The fiber is the space of pairs $(r,r')$ such that $(\phi, r)$ is an element of $U_0$ and $(\phi, r')$ is an element of $U_1$, so it is a product of nonempty intervals of $\mathbb{R}$ and therefore contractible.
    \end{proof}
    
    \begin{cor}
    The space $\Phi_N^N$ is weakly equivalent to the homotopy pushout:
        \begin{center}
        \begin{tikzcd}
        U''_{01}\arrow{r}\arrow{d}&U'_1\arrow{d}\\
        U'_0\arrow{r} &\Phi_N^N
        \end{tikzcd}
        \end{center}
    \end{cor}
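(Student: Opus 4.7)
The plan is to combine the previous lemmas in this section via the homotopy invariance of homotopy pushouts. We have already established four facts that, taken together, essentially force the result: namely that $\Phi_N^N$ is the homotopy pushout of $U_0 \leftarrow U_{01} \to U_1$ (the first lemma of the subsection), that the projections $U'_1 \to U_1$ and $U'_0 \to U_0$ are weak equivalences, and that $U''_{01} \to U_{01}$ is a weak equivalence.

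Concretely, I would first observe that the squares built from the projection maps commute strictly on the nose: the map $U''_{01} \to U'_1$ sends $(\phi,r,r')$ to $(\phi,r')$, and postcomposing with $U'_1 \to U_1$ gives $(\phi,r,r') \mapsto \phi$, which agrees with the composite $U''_{01} \to U_{01} \to U_1$. The analogous commutation holds on the $U_0$ side. Thus we have a map of spans
\[
(U'_0 \leftarrow U''_{01} \to U'_1) \longrightarrow (U_0 \leftarrow U_{01} \to U_1)
\]
in which each of the three vertical arrows is a weak equivalence by the lemmas above. By the standard homotopy invariance of homotopy pushouts under objectwise weak equivalences of spans, the induced map on homotopy pushouts is a weak equivalence. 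Combined with the identification of the second pushout with $\Phi_N^N$, this yields the desired weak equivalence.

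The one small subtlety to address is that, for the homotopy invariance of pushouts to apply cleanly, one often wants the spans to consist of cofibrations (or at least have well-defined homotopy pushouts). Here the maps $U''_{01} \to U'_0$ and $U''_{01} \to U'_1$ are open inclusions (after identifying $U''_{01}$ with its image), so both pushouts are honest open-cover pushouts and in particular are models for their homotopy pushouts. I expect this to be a routine verification rather than a genuine obstacle; the main content of the corollary is really just the bookkeeping of assembling the four preceding lemmas into a single square.
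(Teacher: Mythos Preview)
Your main argument is correct and matches the paper's intent: the corollary is stated without proof there precisely because it follows immediately from the preceding three lemmas via homotopy invariance of homotopy pushouts under objectwise weak equivalences of spans.

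One correction to your final paragraph: the maps $U''_{01}\to U'_0$ and $U''_{01}\to U'_1$ are \emph{not} open inclusions. They are the projections $(\phi,r,r')\mapsto(\phi,r)$ and $(\phi,r,r')\mapsto(\phi,r')$, which are not injective (the fibers are nonempty intervals of admissible radii). Fortunately this does not matter: homotopy pushouts are defined for arbitrary spans (e.g.\ via the double mapping cylinder), and the invariance statement you need---that an objectwise weak equivalence of spans induces a weak equivalence on homotopy pushouts---holds without any cofibrancy hypothesis on the legs. So simply delete the open-inclusion claim; the rest of your argument stands as written.
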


    \begin{lem}
        The spaces $U'_0$ and $U'_1$ are contractible.
    \end{lem}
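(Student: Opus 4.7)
The plan is to construct explicit deformation retractions contracting each space to a point. For $U'_0$ the retraction sends every path to the basepoint by radially dilating it to infinity. For $U'_1$ the construction is more subtle: one isolates the unique strand through $B(0,r)$, pushes away the remaining strands, and then straightens the central strand to a canonical vertical path.

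For $U'_0$: given $(\phi,r)$ with $\phi \cap B(0,r)=\emptyset$, I would set $H_s(\phi,r) := (\phi_s, r)$ for $s \in [0,1)$, where $\phi_s$ is the image of $\phi$ under the radial dilation of $\mathbb{R}^{N+1}$ from the origin by factor $1/(1-s)$. Since every point of $\phi$ has norm at least $r$, every point of $\phi_s$ has norm at least $r/(1-s) \geq r$, so $\phi_s \cap B(0,r)=\emptyset$ and $(\phi_s, r) \in U'_0$. The extension by $H_1(\phi,r) := (\emptyset, r)$ is continuous in the limit topology on $\Phi_N^N$, because for each cube $[-R/2, R/2] \times B_R^N(0)$ the dilated path $\phi_s$ eventually exits the cube. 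This produces a deformation retraction of $U'_0$ onto $\{(\emptyset, r) : r > 0\} \cong (0, \infty)$, which is contractible.

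For $U'_1$: given $(\phi, r)$ with $\phi \cap B(0,r)$ equal to a unique strand $\sigma \subset \phi$, the remaining strands $\phi \setminus \sigma$ all lie outside $B(0,r)$ and so behave like an element of the $U'_0$-setting. I would carry out the contraction in two phases. In phase one ($s \in [0, 1/2]$), push $\phi \setminus \sigma$ out to infinity while leaving $\sigma$ fixed, so that at $s = 1/2$ only the strand $\sigma$ remains. In phase two ($s \in [1/2, 1]$), straighten $\sigma = \{(t, z(t)) : t \in \mathbb{R}\}$ to the standard vertical strand $\sigma_0 = \{(t, 0) : t \in \mathbb{R}\}$ via the homotopy $z \mapsto (2 - 2s)z$, which causes no collision since no other strands remain. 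Finally, contract the $r$-coordinate to a fixed value, yielding a contraction of $U'_1$ to the point $(\sigma_0, 1)$.

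The main obstacle is phase one of the $U'_1$ argument: pushing the outer strands off without collision with the fixed strand $\sigma$. Two ingredients resolve this. First, the decomposition $\phi = \sigma \sqcup (\phi \setminus \sigma)$ depends continuously on $(\phi, r) \in U'_1$, since $\sigma$ is uniquely determined as the component of $\phi$ meeting $B(0,r)$, and this property is stable under small perturbations of $(\phi, r)$. Second, since $\sigma$ is a $1$-dimensional submanifold of $\mathbb{R}^{N+1}$ with $N \geq 3$, it has codimension at least $3$, so the complement $\mathbb{R}^{N+1} \setminus \sigma$ is simply connected, and each outer strand can be isotoped around $\sigma$ and sent to infinity in a way that depends continuously on $(\phi, r)$. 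Once these two points are verified, the two-phase homotopy above produces the desired contraction.
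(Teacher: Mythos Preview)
Your argument for $U'_0$ via radial dilation is correct and parallel in spirit to the paper's, which instead translates $\phi$ to infinity along the direction $u = \phi(0)/\|\phi(0)\|$ and then normalises $r$ to $1$; your dilation has the mild advantage that it manifestly preserves disjointness of all strands throughout.

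For $U'_1$ the paper takes a shorter, more conceptual route than your two-phase retraction: it writes down explicit maps $f: U'_1 \to D^N$, $(\phi,r)\mapsto \phi(0)/r$ (the time-$0$ position of the unique central strand, rescaled) and $g: D^N \to U'_1$, $x\mapsto (t\mapsto (t,x),\,1)$, and asserts these are homotopy inverses, so $U'_1\simeq D^N\simeq *$. Your approach also works in outline, but the justification you offer for phase one is off-target: simple connectivity of $\mathbb{R}^{N+1}\setminus\sigma$ is a statement about null-homotopies of loops, not about isotopies of embedded sections, and it says nothing about avoiding mutual collisions among the several outer strands or about continuous dependence on $(\phi,r)$. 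A direct formula does the job without any appeal to connectivity of complements: writing $z_1$ for the central strand and $z_2,\dots,z_n$ for the others, set $z_i^s(t)=z_1(t)+(z_i(t)-z_1(t))/(1-s)$ for $i\geq 2$ and $s\in[0,1)$. This keeps all strands pairwise distinct at every time, sends the outer ones to infinity as $s\to 1$, and is visibly continuous in $(\phi,r,s)$.
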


    \begin{proof}
    The following maps
    \begin{alignat*}{4}
        f: U'_1 &\rightarrow D^{N} \qquad \qquad &g: D^{N}&\rightarrow U'_1\\
        (\phi,r)&\mapsto \frac{\phi(0)}{r}\qquad \qquad &x &\mapsto (t\mapsto (t,x),1)
    \end{alignat*}
    are homotopy inverses of each other, therefore $U'_1\simeq *$.\\
    We define a homotopy between the identity on $U'_0$ and the constant map to $(\emptyset,1)$, where $\emptyset$ is the point at infinity,
    which pushes $\phi$ to infinity along the vector $u:= \frac{\phi(0)}{\vert\vert\phi(0)\vert\vert}$, and then rescales $r$ to 1.
    \end{proof}

    \begin{lem}
    There is an equivalence $U''_{01}\simeq S^{N-1}$.
    \end{lem}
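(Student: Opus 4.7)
The plan is to define a map $f\colon U''_{01}\to S^{N-1}$ and verify it is a Serre microfibration with weakly contractible fibers; the conclusion $U''_{01}\simeq S^{N-1}$ then follows from the Weiss microfibration lemma recalled earlier in the paper.

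For $(\phi,r,r')\in U''_{01}$, let $\gamma$ denote the unique strand of $\phi$ meeting $B(0,r')$. Since $\phi$ is defined on all of $\mathbb{R}$, the spatial position $z_\gamma(0)\in\mathbb{R}^N$ is well-defined, and the hypothesis $\phi\cap B(0,r)=\emptyset$ forces $|z_\gamma(0)|>r>0$ (since $0$ lies in the time-window of the inner region). I set
\[
f(\phi,r,r'):=\frac{z_\gamma(0)}{|z_\gamma(0)|}\in S^{N-1}.
\]
To verify the microfibration property, given a small path $u_s$ in $S^{N-1}$ starting at $u_0=f(\phi_0,r_0,r'_0)$, I lift it by applying a continuous family of rotations $R_s\in SO(N)$ with $R_0=\id$ and $R_s u_0=u_s$ to the spatial coordinates of $\phi_0$. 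For $s$ sufficiently small, the rotated configuration still satisfies the open conditions defining $U''_{01}$, giving a valid partial lift with the prescribed domain of existence.

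For the fiber $f^{-1}(u)$ to be weakly contractible, I build a deformation retraction onto a single canonical point $(\phi_u,\tfrac12,2)$, where $\phi_u$ is the single constant strand $t\mapsto(t,u)$. The retraction proceeds by (a) pushing every strand of $\phi$ other than $\gamma$ radially to spatial infinity, so that they are eventually forgotten by the inverse limit defining $\Phi_N^N$; (b) straightening $\gamma$ to the constant strand at $z_\gamma(0)$ via a cutoff-localized interpolation near $t=0$; and (c) scaling $|z_\gamma(0)|$ to $1$ and $(r,r')$ to $(\tfrac12,2)$ by a homothety fixing the direction $u$. Each step preserves the quantity $z_\gamma(0)/|z_\gamma(0)|=u$, so stays inside $f^{-1}(u)$.

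The main technical obstacle is step (b): the naive interpolation $z_s(t)=(1-s)z_\gamma(t)+s\,z_\gamma(0)$ could drive the strand into $B(0,r)$ at intermediate $s$ if $z_\gamma(t)$ is far from $z_\gamma(0)$ for some $t\in[-r/2,r/2]$. This is circumvented by first localizing the straightening to a small neighborhood of $t=0$ where $z_\gamma(t)$ is close to $z_\gamma(0)$ by continuity, and using a smooth cutoff to glue with the untouched remainder of the strand --- widening the annulus first, if necessary, so that the interpolating segments lie comfortably inside it.
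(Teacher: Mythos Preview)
Your map $f$ agrees with the paper's, but the paper takes a more direct route: it writes down an explicit section $g\colon S^{N-1}\to U''_{01}$, $x\mapsto (t\mapsto(x,t),\,2,\,1)$, observes $f\circ g=\id$, and then builds a single global homotopy $g\circ f\simeq \id$ by rescaling $\phi$ toward the constant path at $\phi(0)/|\phi(0)|$ while adjusting $(r,r')$ to $(1,2)$. Your microfibration-with-contractible-fibers argument is logically sound and fits the toolbox used elsewhere in the paper, but it is strictly more work: the fiberwise contraction you describe is essentially the same homotopy the paper writes down, except you then have to supply the microfibration check on top of it. Once you have that deformation in hand, it already gives $g\circ f\simeq\id$ globally, so the Weiss lemma is not needed.

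Two small points on your execution. First, the microfibration property must be verified for maps out of $D^k\times[0,1]$ for all $k$, not just paths; your rotation lift does extend to this parametric setting (using a local section of $SO(N)\to S^{N-1}$ and compactness of $D^k$), but you should say so. Second, your step (b) is more delicate than it needs to be: since spatial rotations about the origin preserve both the emptiness condition in $B(0,r)$ and the single-strand condition in $B(0,r')$, and since radial scaling of $(\phi,r,r')$ together also preserves them, you can normalize $|z_\gamma(0)|=1$ first and then interpolate without the cutoff gymnastics --- this is closer to what the paper's one-line ``rescales $\phi$ to a constant path'' is gesturing at.
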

        
    \begin{proof}
    We define a map 
    \begin{align*}
    f: U''_{01} &\rightarrow S^{N-1}\\
    (\phi,r,r') &\mapsto \frac{\phi(0)}{\vert\vert\phi(0)\vert\vert}
    \end{align*}
    which is well-defined because $(\phi,r')\in U'_0$ so $\phi(0)\notin B(0,r')$ and in particular $\phi(0)\neq 0$.
    There is another map 
    \begin{align*}
    g: S^{N-1} &\rightarrow U''_{01}\\
    x &\mapsto (\phi:t\mapsto (x,t),r=2,r'=1)\\
    \end{align*}
    Notice that $f\circ g = \id$. 
    We also build a homotopy $h$ between $\id$ and $g\circ f$, which rescales $\phi$ to a constant path, and $r$ and $r'$ to $1$ and $2$.
    This concludes the proof that $U''_{01}$ and $S^{N-1}$ are equivalent.
    \end{proof}
 
\begin{cor}\label{finaldelooping}
The space $\Phi_N^N$ is equivalent to the $N$-sphere $S^N$.
\end{cor}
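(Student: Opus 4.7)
The plan is to assemble the preceding lemmas into a single homotopy pushout and recognize it as a suspension. By the corollary stated just after the introduction of $U''_{01}$, the space $\Phi_N^N$ is weakly equivalent to the homotopy pushout
\[
\Phi_N^N \simeq U'_0 \cup^h_{U''_{01}} U'_1.
\]
I would substitute into this pushout the three equivalences established in the three lemmas immediately preceding: $U'_0 \simeq *$, $U'_1 \simeq *$, and $U''_{01} \simeq S^{N-1}$. Since the homotopy pushout is invariant under weak equivalences of the three corners (provided the legs are replaced accordingly), this gives
\[
\Phi_N^N \simeq * \cup^h_{S^{N-1}} *.
\]

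The final step is to identify this double-cone pushout with $S^N$. This is the standard presentation of the unreduced suspension $\Sigma S^{N-1}$, and the homeomorphism $\Sigma S^{N-1} \cong S^N$ is classical (one can write it explicitly by sending $(x,t) \in S^{N-1}\times[0,1]$ to $(\sin(\pi t)\, x, \cos(\pi t)) \in S^N \subset \mathbb{R}^{N+1}$, which collapses the two ends to the north and south poles). Combining these equivalences yields $\Phi_N^N \simeq S^N$.

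The only point requiring a little care is the verification that the maps $U''_{01} \to U'_0$ and $U''_{01} \to U'_1$ become, up to homotopy, the constant maps to a point (so that the pushout really is a suspension rather than some other pushout over $S^{N-1}$). But since both $U'_0$ and $U'_1$ are contractible, any two maps into them are homotopic, so up to equivalence of spans the pushout diagram is indeed $* \leftarrow S^{N-1} \rightarrow *$. I expect this to be the main (though very mild) subtlety; everything else is a direct application of the lemmas already proven.
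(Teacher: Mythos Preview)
Your proposal is correct and matches the paper's intent exactly: the paper states this corollary without proof, treating it as immediate from the preceding chain of lemmas (the homotopy pushout description of $\Phi_N^N$ in terms of $U'_0,U'_1,U''_{01}$, the contractibility of $U'_0$ and $U'_1$, and the equivalence $U''_{01}\simeq S^{N-1}$), and your write-up simply spells out that assembly, including the mild but correct observation that contractibility of the targets makes the span equivalent to $*\leftarrow S^{N-1}\rightarrow *$.
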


We will describe the loop space $\Omega\Phi_{N+1}^{N+1}$ by using the convention that $S^1$ is the one-point compactification of $\mathbb{R}$.
The spaces $(\Phi_N^N)$ assemble into a spectrum $\Phi$ with the maps
\begin{align*}
    \omega_N: \Phi_N^N&\rightarrow \Omega \Phi_{N+1}^{N+1}\\
    \phi &\mapsto (\gamma : t\mapsto \phi(-)+(0,\dots,0,t)). 
\end{align*}
We would like to have a stronger result -- we need compatibility relations between the maps $\Phi_N^N\rightarrow \Omega \Phi_{N+1}^{N+1}$ and $S^N\rightarrow \Omega S^{N+1}$.
\begin{lem}\label{spectrumeq}
    There is an equivalence between the spectrum $\Phi$ and the sphere spectrum.
\end{lem}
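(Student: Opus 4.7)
The plan is to build an explicit map of spectra $\iota:\mathbb{S}\to\Phi$ and verify that it is a levelwise weak equivalence. For each $N\geq 0$, I would define $\iota_N: S^N\to\Phi_N^N$ by sending $x\in\mathbb{R}^N\subset S^N$ to the constant singleton-configuration path $\phi_x(t)=(t,\{x\})$, and the point at infinity to the basepoint (the empty path). Here $S^N$ is viewed as the one-point compactification of $\mathbb{R}^N$, in keeping with the paper's convention.

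First, I would verify that $\iota$ commutes with the structure maps, namely that the square
\begin{equation*}
\begin{tikzcd}
S^N \arrow[r,"\iota_N"] \arrow[d,"\tilde\sigma_N"'] & \Phi_N^N \arrow[d,"\omega_N"] \\
\Omega S^{N+1} \arrow[r,"\Omega\iota_{N+1}"'] & \Omega \Phi_{N+1}^{N+1}
\end{tikzcd}
\end{equation*}
commutes, where $\tilde\sigma_N$ is the adjoint of the standard sphere-spectrum structure map. This is a direct computation: both composites send $x\in\mathbb{R}^N$ to the loop $t\mapsto \phi_{(x,t)}$ whose value at $s$ is the singleton configuration $(s,\{(x,t)\})\in\mathbb{R}\times\mathbb{R}^{N+1}$, so the square commutes on the nose.

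Next, I would show that each $\iota_N$ is a weak equivalence. By Corollary \ref{finaldelooping}, the target is equivalent to $S^N$, so it suffices to identify $\iota_N$, up to homotopy, with the explicit equivalence $S^N\simeq\Phi_N^N$ produced in that proof via the homotopy pushout $U'_0\leftarrow U''_{01}\to U'_1$. For $x\in\mathbb{R}^N\setminus\{0\}$, the path $\phi_x$ lies in $U_1\cap U_0 = U_{01}$ and maps under $f(\phi,r,r')=\phi(0)/\|\phi(0)\|$ to $x/\|x\|\in S^{N-1}$; the origin is sent into $U_1$ only, while the point at infinity is the basepoint in $U_0$. Under the equivalences $U'_0\simeq *$, $U'_1\simeq *$, $U''_{01}\simeq S^{N-1}$ from the preceding lemmata, the map $\iota_N$ therefore corresponds, up to homotopy, to the canonical identification $S^N\cong \Sigma S^{N-1}$ in which the radial coordinate of $\mathbb{R}^N$ plays the role of the suspension parameter.

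The main obstacle is making this last identification rigorous: one must produce an explicit homotopy inside the two cones $U'_0$ and $U'_1$ between $\iota_N$ and the map induced by the pushout, by reusing the contractions of $U'_0$ and $U'_1$ (rescaling $r$ and pushing off to infinity) constructed above. Once this verification is in place, each $\iota_N$ is a weak equivalence, and hence $\iota:\mathbb{S}\to\Phi$ is a levelwise equivalence of spectra between the sphere spectrum and $\Phi$.
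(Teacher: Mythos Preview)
Your approach is correct but differs from the paper's. The paper does not construct a direct map $\mathbb{S}\to\Phi$; instead it assembles the homotopy pushouts $\Phi_N^{'N}:=U'_0\cup_{U''_{01}}U'_1$ into an intermediate spectrum $\Phi'$ (with structure maps $(\phi,r)\mapsto(\omega_N(\phi),r)$) and exhibits a zigzag of spectra $\Phi\leftarrow\Phi'\rightarrow\mathbb{S}$, checking only that the evident projection and the maps to $S^N$ coming from the contractions of $U'_0,U'_1$ and the equivalence $U''_{01}\simeq S^{N-1}$ are compatible with suspension. This sidesteps exactly the step you flag as the ``main obstacle'': in the paper there is no need to identify an externally defined map with the pushout equivalence, because the intermediate object \emph{is} the pushout and the two legs of the zigzag are the canonical ones. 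Your route buys an honest map of spectra rather than a zigzag, which is pleasant, but it costs the extra verification you describe; to carry it out cleanly you would decompose $S^N=D^N_+\cup_{S^{N-1}}D^N_-$ (say along $\lVert x\rVert=1$), lift $\iota_N$ to a map of pushout squares $D^N_+\to U'_1$, $D^N_-\to U'_0$, $S^{N-1}\to U''_{01}$ via $x\mapsto(\phi_x,2)$, $x\mapsto(\phi_x,1/2)$, $x\mapsto(\phi_x,1/2,2)$, and observe that on $S^{N-1}$ this composes with $f(\phi,r,r')=\phi(0)/\lVert\phi(0)\rVert$ to the identity.
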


\begin{proof}
    Recall $U'_0$, $U'_1$, and $U''_{01}$ defined above. We denote $\Phi_N^{'N}$ the pushout of $U'_0\leftarrow U''_{01}\rightarrow U'_1$.
    It is not apparent in the notation that these spaces depend on $N$ but they do.
    The $\Phi_N^{'N}$ form a spectrum, via the maps $\Phi_{N}^{'N}\rightarrow \Omega\Phi_{N+1}^{'N+1}$ which send $(\phi,r)$ to $(\omega_N(\phi),r)$. 

    We are going to show that there is a zigzag of spectra:
    \[\Phi_\bullet^\bullet \leftarrow \Phi_\bullet^{'\bullet} \rightarrow \mathbb{S}^\bullet.\]
    We need to show that the following diagram commutes:
    \begin{center}
    \begin{tikzcd}
        \Phi_N^N\arrow{d}{\omega_N} &\Phi_N^{'N}\arrow{l}\arrow{r}\arrow{d} &S^N\arrow{d}\\
        \Omega\Phi_{N+1}^{N+1}&\Omega \Phi_{N+1}^{'N+1}\arrow{l}\arrow{r} &\Omega S^{N+1}.
    \end{tikzcd}
    \end{center}
    
    Let us check this in detail for $(\phi,r)\in U'_0$.
    The left square commutes because both vertical maps are suspensions, and both horizontal maps are projections.
    The right square commutes as well: $(\phi,r)\in U'_0$ is mapped to the basepoint of $S^N$ and then the constant path at the basepoint in $\Omega S^{N+1}$.
    Mapping first vertically sends it to the path $(\phi(-)+(0,\dots,0,t),r)$ which is also mapped to the constant path at the basepoint of $\Omega S^{N+1}$.
    We proceed in a similar fashion for $U'_1$. 

    The horizontal maps are weak homotopy equivalences, so we have a zigzag of weak equivalences of spectra, and therefore a weak equivalence $\Phi \simeq \mathbb{S}$.
\end{proof}

\section{Conclusion}
We now relate the colimit of the classifying spaces of the symmetric groups denoted by $B\Sigma_\infty$ and the monoid given by their disjoint union $\bigsqcup_{n\in\mathbb{N}} B\Sigma_n$.
Consider the map $\sigma: \bigsqcup B\Sigma_n \rightarrow \bigsqcup B\Sigma_n$ induced by the maps $B\Sigma_k\rightarrow B\Sigma_{k+1}$.

As stated in the proof of Proposition 1 in \cite{mcduffsegal}, the group completion of the monoid, denoted by $\Omega B(\bigsqcup_{n\in\mathbb{N}} B\Sigma_{n})$, is equivalent in homology to the colimit 
\[\operatorname{colim} \bigg(\bigsqcup_n B\Sigma_n\xrightarrow{\sigma} \bigsqcup_n B\Sigma_n\xrightarrow{\sigma}\dots\bigg).\]
Moreover:
\[\operatorname{colim} \bigg(\bigsqcup_n B\Sigma_n\xrightarrow{\sigma} \bigsqcup B\Sigma_n\xrightarrow{\sigma}\dots\bigg) \simeq \mathbb{Z}\times \operatorname{colim}B\Sigma_n.\]

Therefore, there is a homology equivalence.
\begin{align}\label{gpcompletion}
\mathbb{Z}\times B\Sigma_\infty \overset{H_*}{\simeq} \Omega B(\bigsqcup_{n\in\mathbb{N}} B\Sigma_n).
\end{align}

\begin{teo}
There is a homology equivalence :
\[ B\Sigma_\infty \overset{H_*}{\simeq} \Omega^\infty_0 S^\infty.\]
\end{teo}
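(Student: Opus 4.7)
The plan is to assemble the results of the previous sections into one master zigzag identifying $\Omega BM$ (where $M := \bigsqcup_{n} B\Sigma_n$ denotes the block-sum topological monoid) with $\Omega^\infty S^\infty$, and then invoke the McDuff--Segal group completion theorem (Theorem \ref{gpcompletion}) to extract the desired homology equivalence.

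First, I would build the chain on the ``configurations'' side. Proposition \ref{homeq} combined with Proposition \ref{deloopinglevelone} (at $N = \infty$) gives a zigzag of weak equivalences $M \simeq B\C_\infty \simeq \Phi_0^\infty$. Iterating Proposition \ref{higherdelooping} for $k = 1,\dots, N-1$ and using Corollary \ref{finaldelooping}, one obtains for each finite $N$ an equivalence $\Omega \Phi_1^N \simeq \Omega^N \Phi_N^N \simeq \Omega^N S^N$; Lemma \ref{spectrumeq} then guarantees that these equivalences intertwine with the structure maps of the sphere spectrum, so passing to the colimit $N\to \infty$ yields $\Omega \Phi_1^\infty \simeq \Omega^\infty S^\infty$. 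Interpreting the $k=0$ case of Proposition \ref{higherdelooping} through Lemma \ref{semiSegal} as a statement about the classifying space of the (non path-connected) monoid $\Phi_0^\infty$ identifies $BM \simeq B\Phi_0^\infty \simeq \Phi_1^\infty$, and hence $\Omega BM \simeq \Omega^\infty S^\infty$.

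Next, I would verify the hypotheses of Theorem \ref{gpcompletion} for $M$: $\pi_0(M) = \mathbb{N}$ is generated by the class of a point in $B\Sigma_1$, hence is a commutative (in particular central) submonoid of the Pontryagin ring $H_*(M)$. The theorem then yields an isomorphism $H_*(M)[\pi^{-1}] \cong H_*(\Omega BM)$, and since $H_*(M)[\pi^{-1}] \cong H_*(\mathbb{Z}\times B\Sigma_\infty)$ as recalled at the start of Section 6, restricting to the basepoint component produces
\[
H_*(B\Sigma_\infty) \cong H_*(\Omega^\infty_0 S^\infty),
\]
which is the statement.

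The main obstacle I anticipate is the $\pi_0$-mismatch between $\Phi_0^\infty$ (with $\pi_0 = \mathbb{N}$) and $\Omega^\infty S^\infty$ (with $\pi_0 = \mathbb{Z}$): iterating Proposition \ref{higherdelooping} starting at $k = 0$ is really a group completion step via Segal's theorem for non path-connected $X_1$, rather than a plain delooping, so one must be careful about what ``$\Omega BM$'' means and check that the scanning construction realizes the group completion compatibly with the monoid structure on $M$. Happily, this subtlety dovetails exactly with the centrality hypothesis of McDuff--Segal, so once the compatibility with the sphere spectrum structure maps is confirmed via Lemma \ref{spectrumeq}, the argument closes.
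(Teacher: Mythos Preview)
Your proposal is correct and uses exactly the same chain of results as the paper (Propositions \ref{homeq}, \ref{deloopinglevelone}, \ref{higherdelooping}, Corollary \ref{finaldelooping}, Lemma \ref{spectrumeq}, and Theorem \ref{gpcompletion}). You are in fact more careful than the paper about the $\pi_0$-mismatch at the $k=0$ delooping step---correctly isolating it as a group-completion statement $BM\simeq \Phi_1^\infty$ before invoking McDuff--Segal---whereas the paper writes $\Phi_0^N \simeq \Omega^N S^N$ and $\bigsqcup B\Sigma_n \simeq \Omega^\infty S^\infty$ and passes to the basepoint component somewhat informally.
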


\begin{proof}
Corollary \ref{loopdeloop} yields 
\[\Omega B \Phi_0^N \simeq \Omega^N \Phi_N^N.\]
Combining this with lemma \ref{finaldelooping}, we get: 
\[\Omega B \Phi_0^N \simeq \Omega^N S^N.\]
Now, lemma \ref{deloopinglevelone} gives us:
\[\Omega B\C_N\simeq \Omega^N S^N.\]
If we let $N$ go to infinity, we get an equivalence with the infinite loop space containing the basepoint via lemma \ref{spectrumeq}:
\[\Omega B\C_\infty \simeq \Omega^\infty S^\infty.\]
Then using proposition \ref{homeq}, we get
\[\Omega B \bigsqcup B\Sigma_n \simeq \Omega^\infty S^\infty,\]
and therefore, using equation \ref{gpcompletion}, we obtain: 
\[B\Sigma_\infty \overset{H_*}{\simeq} \Omega^\infty_0 S^\infty.\qedhere\] 
\end{proof}

\printbibliography

\end{document}